\newtheorem{proposition}{Proposition}
\newtheorem{defin}{\bf Definition}
\newenvironment{proof}{\noindent{\bf Proof.}}{\hfill $\diamond$} 
\def\ga{\mbox{Ga}}
\def\iga{\mbox{IGa}}
\def\be{\mbox{Be}}
\def\ibe{\mbox{IBe}}
\def\ipa{\mbox{IPa}}
\def\bin{\mbox{Bin}}
\def\ghs{\mbox{GHS}}
\def\no{\mbox{N}}
\def\nbi{\mbox{NB}}
\def\pa{\mbox{Pa}}
\def\po{\mbox{Po}}
\def\gsst{\mbox{GSSt}}
\def\un{\mbox{Un}}
\def\E{\mbox{E}}
\def\V{\mbox{Var}}
\def\Cor{\mbox{Corr}}
\def\v{\mbox{V}}
\def\Cv{\mbox{Cov}}
\def\d{\mbox{d}}
\def\bn{{\bf n}}
\def\bs{{\bf s}}
\def\bS{{\bf S}}
\def\bY{{\bf Y}}
\def\simiid{\stackrel{\mbox{\scriptsize{iid}}}{\sim}}
\newcommand{\btheta}{\boldsymbol{\theta}}
\newcommand{\bomega}{\boldsymbol{\omega}}
\newcommand{\CC}{\mathcal{C}}
\newcommand{\FC}{\mathcal{F}}
\newcommand{\MC}{\mathcal{M}}
\newcommand{\PC}{\mathcal{P}}
\newcommand{\RR}{\mathbb{R}}
\newcommand{\rd}{\mathrm{d}}
\newcommand{\rr}{\mathbb{R}}
\newcommand{\ca}[1]{ {\cal #1} }
\newcommand\ie{{i.e., \/}}
\begin{document}

\baselineskip=24pt

\title{\bf General dependence structures for some models based on exponential families with quadratic variance functions}
\author{{\sc Luis Nieto-Barajas$^1$ \& Eduardo Guti\'errez-Pe\~na$^2$} \\[2mm]
{\sl $^1$ Department of Statistics, ITAM, Mexico} \\[2mm]
{\sl $^1$ Department of Probability and Statistics, IIMAS-UNAM, Mexico} \\[2mm]
{\small {\tt lnieto@itam.mx {\rm and} eduardo@sigma.iimas.unam.mx}} \\}
\date{}
\maketitle

\begin{abstract}
We describe a procedure to introduce general dependence structures on a set of random variables. These include order-$q$ moving average-type structures, as well as seasonal, periodic, spatial and spatio-temporal dependences. The invariant marginal distribution can be in any family that is conjugate to an exponential family with quadratic variance function. Dependence is induced via a set of suitable latent variables whose conditional distribution mirrors the sampling distribution in a Bayesian conjugate analysis of such exponential families. We obtain strict stationarity as a special case. 
\end{abstract}

\vspace{1ex} 

\noindent {\sl Keywords}: Autoregressive process; conjugate family; exponential family; latent variable; moving average process; stationary process.

\section{Introduction}
\label{sec:intro}

Dependence structures on a set of random variables $\{Y_i\}$ can be modelled in various different ways. For instance, in the analysis of time series, autoregressive (AR) processes of order $p$ establish a linear dependence of the form 
\begin{equation}
\label{eq:ar}
Y_i = \beta_0 + \beta_1 Y_{i-1} + \cdots + \beta_p Y_{i-p} + Z_i,
\end{equation}
where $Z_i$ is a random error such that $\E(Z_i)=0$ and $\V(Z_i)=\sigma^2$, the $\beta_i$'s are coefficients, and the index $i$ is usually referred to time. Further conditions on the coefficients need to be imposed if we require the process \eqref{eq:ar} to be stationary \citep{box&jenkins:70}. If the number of elements in the set $\{Y_i\}$ is finite, conditions for \eqref{eq:ar} to be strictly stationary for any $p>0$ are not available. They do exist for $p=1$, but only in the normal case \cite[e.g.][]{mendoza&nieto:06}. 

On the other hand, moving average (MA) processes of order $q$ are defined through averages of the lagged errors as
\begin{equation}
\label{eq:ma}
Y_i = \gamma_0 + Z_{i} + \gamma_1 Z_{i-1} + \cdots + \gamma_q Z_{i-q},
\end{equation}
where the $\gamma_i$'s are coefficients. In this case, the process is always second-order stationary but no general conditions are known for strict stationarity, regardless of whether the set $\{Y_i\}$ is finite or not. 

For spatial models, the most common dependence structure for a set of random variables $\{Y_i\}$ is induced through conditional autoregressive (CAR) specifications \citep{besag:74}. Let $\partial_i$ be the set of neighbours of region $i$; then a CAR model is defined by 
\begin{equation}
\label{eq:car}
Y_i=\sum_{j\in\partial_i}\beta_{ij}Y_{j}+Z_i,
\end{equation}
where, as before, $Z_i$ is a random error such that $\E(Z_i)=0$ and $\V(Z_i)=\sigma_i^2$, and the $\beta_{ij}$'s are coefficients. Here, if the number of elements in the set $\{Y_i\}$ is finite, and
under the assumption that the errors are independent and follow a normal distribution, a symmetry condition on the $\beta_{ij}$'s and the $\sigma_i^2$'s is required to obtain the joint distribution of the $Y_i$'s. However, this joint distribution turns out to be improper. To achieve propriety, \ie a well-defined multivariate normal distribution, an extra parameter with a constrained support is required. Strict second-order stationarity is not possible even in this latter case. 

Autoregressive-type processes with marginal distributions other than normal have been studied for the case $p=1$. \cite{lawrance:82} and \cite{walker:00} studied the distribution of the innovation term $Z_i$ that ensures a gamma marginal distribution. In turn, \cite{nieto&walker:02} defined order-one dependent stationary processes with beta and gamma marginals by means of a Markov construction via latent variables. This construction was further generalized by \cite{pitt&al:02} to more general exponential families. 

Conditional autoregressive processes for spatial modelling were also considered for zero-one random variables by \cite{besag:74} himself, who proposed the so-called autologistic model. Extension to multinomial responses yields the Potts model \citep[e.g.][]{green&richardson:02}. In these two non-normal models, the joint distributions are not explicitly known since the normalising constants are not analytically available. Therefore stationarity conditions are out of the question. More recently, \cite{nieto&bandyopadhyay:13} proposed a Markov random field with gamma marginal distributions. 

The aim of this paper is to propose a way of constructing dependence structures on a collection of random variables for a class of distributions that are conjugate to exponential families with quadratic variance functions. The marginal distribution is invariant and dependences include order-$q$ moving average-type, seasonal, periodic, spatial and spatio-temporal. While our construction can in principle be applied to more general exponential families, those having a quadratic variance function allow a particularly neat treatment because their first and second moments can be calculated in closed form.

In the next section, we review the theory of exponential families. In Section~\ref{sec:model} we describe our construction and discuss its main properties. We then illustrate our model in Section~\ref{sec:data}, both with synthetic and real data sets. Section~\ref{sec:conclusions} contains some concluding remarks.

\section{Preliminaries}
\label{sec:expfam}

\subsection{Exponential families}

We start by reviewing some basic concepts concerning exponential family models and introducing some notation. The reader is referred to \cite{BN:78} for a comprehensive account of the theory of exponential families.

Let $\nu$ be a $\sigma$-finite measure on the Borel sets of
$\RR$ (typically Lebesgue measure or a counting measure) and consider the family $\PC = \{ P_{\omega} : \bomega \in \Omega \subseteq \RR^k \}$ of probability measures dominated by $\nu$ such that
\begin{equation*}
	\frac{ d P_{\omega}(x) }{ d\nu } =
	a(x) \, \exp \{ \btheta(\bomega)'\bs(x) -
	M(\btheta(\bomega)) \},
\end{equation*}
where $k$ is a positive integer, $\btheta(\cdot)=(\theta_1(\cdot),\ldots,\theta_k(\cdot))$, and $M(\cdot)$, $\theta_1(\cdot),\ldots,\theta_k(\cdot)$ are all real-valued functions. Also, $a(\cdot)$ is a nonnegative measurable function and $\bs(\cdot) = (s_{1}(\cdot),\ldots,s_{k}(\cdot))$ is some
measurable vector function. This is the general form of an exponential family and all the exponential family densities that appear in the remainder of this paper are particular cases of this expression.

Now assume that the mapping $\bomega \rightarrow \btheta(\bomega)$ is one-to-one and let 
$\FC = \{ f(x \mid \btheta) : \btheta \in \Theta \}$, where
\begin{equation}\label{eq:efam}
	f(x \mid \btheta) = 
	a(x)\,\exp\{ \btheta'\bs(x) - M(\btheta) \},
\end{equation}
with $M(\btheta) = \log \int a(x)\,\exp\{ \btheta'\bs(x) \}
\nu(\d x)$ and $\Theta = \mbox{int } \Xi$, where $\Xi =
\{ \btheta \in \RR^{k} : M(\btheta) < \infty \}$.
Then $\FC$ is an {\em exponential family} with \emph{canonical parameter} $\btheta$ and \emph{canonical statistic} $\bs(x)$. The set $\Theta$ is called the \emph{canonical parameter space}. The canonical statistic $\bs(x)$ is sufficient for the family $\FC$. Under certain regularity conditions, exponential families are essentially the only models admitting a sufficient statistic of a fixed finite dimension \citep{koop:36}. 

Here, we will mostly be concerned with situations where $k = 1$. If $s(\cdot)$ is the identity mapping,  $\FC$ is said to be a \emph{natural exponential family} \citep{morris:82} and $M(\cdot)$ is then called the \emph{cumulant transform} of $\FC$ \citep{BN:78}.
The exponential family $\FC$ is called \emph{regular} if $\Xi$ is an open subset of $\RR^{k}$ \citep{BN:78}. Hereafter, all the exponential families we will work with will be assumed to be regular.

Given a sample $X_1,X_2,\ldots,X_n$ of i.i.d.\ observations from $	f(x\mid\theta)$, the family of distributions of the corresponding sufficient statistic $S_n = \sum_{i=1}^{n} s(x_i)$ is a natural exponential family, with densities of the form
$f(s_n \mid \theta,n) = b(s_n,n)\,\exp\{ \theta s_n - n M(\theta) \}$.
%
Suppose now that $S$ is a random variable distributed according to 
$f(s_n \mid \theta,n)$ with $n=1$. The cumulants of $S$ can then be obtained by differentiating the cumulant transform $M(\theta)$. In particular,
$$
\E(S\mid\theta) = \frac{\d M(\theta)}{\d \theta} \qquad \mbox{ and } \qquad
\V(S \mid \theta) = \frac{\d^{2} M(\theta)}{\d \theta^2}.
$$

Consider the transformation $\mu(\theta) = E(S \mid \theta)$ and let $\mathcal{M} = \mu(\Theta)$. Since $M(\cdot)$ is convex, its second derivative is positive for all $\theta \in \Theta$ and $\mu = \mu(\theta)$ is a one-to-one transformation of~$\theta$. Thus $\mu$ provides an alternative parametrization of the family $\FC$, called the \emph{mean parametrization}. As we will see below, it is convenient to work in terms of this parametrization (instead of the canonical parametrization) since $\mu$ is the expected value of the observations and hence has a useful interpretation which is consistent across all exponential families. In this case, 
$f(s_n \mid \theta,n)$ can be written as
\begin{equation}\label{eq:efammp}
f(s_n | \theta(\mu),n) = b(s_n,n) \, 
\exp \{ \theta(\mu) s_n - n M(\theta(\mu)) \},
\end{equation}
where $\theta(\cdot)$ denotes the inverse of the transformation $\mu(\cdot)$.

Now let $V(\mu) = \V(S \mid \theta(\mu))$ for all $\mu \in \mathcal{M}$. This function is positive and is called the \emph{variance function} of the family $\FC$. The variance function, together with its domain $\mathcal{M}$, characterizes the family $\ca{F}$ within the class of all natural exponential families \citep{morris:82}.

\subsection{Conjugate families}
\label{sec:conjugacy}

A convenient way of modelling prior knowledge about $\theta$ is to use conjugate families. Since their introduction by \cite{RyS}, conjugate families of distributions have played an important role in Bayesian parametric inference. The main property of these families is that they are closed under sampling, but they often provide prior distributions which are tractable in various other respects. 

Let $\CC$ be the family of distributions defined on the Borel sets of $\Theta$ and with density function (with  respect to the Lebesgue measure) of the form
\begin{equation}\label{eq:cfam}
p(\theta \mid s_{0},n_{0}) = h(s_{0},n_{0}) 
\exp\{\theta s_{0}-n_{0} M(\theta)\},
\end{equation}   
where $s_{0}\in\rr$ and $n_{0}\in\rr$ are such that
\begin{equation*}
h(s_{0},n_{0})=\Big\{\int\exp\{\theta s_{0}-n_{0}M(\theta)\}\rd\theta\Big\}^{-1}
\end{equation*}
is well defined. Then $\CC$ is conjugate for $\FC$, meaning that the posterior distribution of $\theta$ also belongs to this family.
$\CC$ is commonly known as the {\em standard conjugate family} of the exponential family $\FC$ \citep{diaconis&ylvisaker:79}. Note that $\CC$ is also an exponential family, in this case with canonical statistic $(\theta,-M(\theta))$ and canonical parameter $(s_{0},n_{0})$. See \cite{GP-S} for an overview of conjugate families for exponential families.

Note that the conjugate family  (induced by $\CC$) for the mean parameter $\mu$ has densities of the form
\begin{equation}\label{eq:cfammp}
p_{\mu}(\mu \mid s_{0},n_{0}) = h(s_{0},n_{0}) 
\exp\{\theta(\mu) s_{0} - n_{0} M(\theta(\mu))\} \, |J_{\theta}(\mu)|,
\end{equation}
where $J_{\theta}(\cdot)$ denotes the Jacobian of the transformation $\theta(\cdot)$.

Having observed the sample $X_1,X_1,\ldots,X_n$ of i.i.d.\ observations from $f(x\mid\theta(\mu))$, with sufficient statistic $S_n=\sum_{i=1}^n s(X_i)$, the likelihood function of $\mu$ is proportional to  \eqref{eq:efammp}. Assuming a prior distribution for $\mu$ with a conjugate density function of the form \eqref{eq:cfammp}, the posterior distribution then has a density function of the form \begin{equation}\label{eq:cfammppost}
p_{\mu}(\mu \mid s^*,n^*) = h(s^*,n^*) 
\exp\{\theta(\mu) s^* - n^* M(\theta(\mu))\} \, |J_{\theta}(\mu)|,
\end{equation}
where $s^* = s_{0} + s_n$ and $n^* = n_0 + n$.

We close this section with the following important result of \cite{diaconis&ylvisaker:79}. If $\FC$ is a regular natural exponential family, and the prior distribution of $\theta$ belongs to the corresponding standard conjugate family $\CC$ with $n_0 > 0$ and $s_0/n_0 \in \mathcal{M}$, then
$$
E(\mu \mid s_0, n_0) = \frac{s_0}{n_0}.
$$
This result implies that the posterior expectation of $\mu$, $E(\mu \mid s^*, n^*)$, is linear in the sample mean $\bar{x}$.

\subsection{The NEF-QVF class}
\label{sec:nef-qvf}

For many common families of distributions, the variance function takes a simple form. \cite{morris:82} characterized all natural exponential families having a quadratic variance function of the form $\v(\mu)=\nu_0+\nu_1\mu+\nu_2\mu^2$, and found that there exist only six families with such property: normal (with known variance), Poisson, gamma, binomial, negative binomial and generalized hyperbolic secant distributions. In a subsequent paper, \cite{morris:83} developed the statistical theory for this class of distributions. In particular, he described the corresponding conjugate families and proved that
$$\V(\mu \mid s_0, n_0) = \frac{\v(s_0/n_0)}{n_0 -\nu_2}.$$

Table \ref{tab:nef} provides some relevant characteristics of the standard member of each of these six families. We describe the notation here: $\no(\mu,\tau)$ denotes a normal distribution with mean $\mu$ and precision $\tau$; $\po(\mu)$ denotes a Poisson distribution with mean $\mu$; $\ga(\alpha,\beta)$ denotes a gamma distribution with mean $\alpha/\beta$; $\bin(p,m)$ denotes a binomial distribution with success probability $p$ and number of Bernoulli trials $m$; $\nbi(p,m)$ denotes a negative binomial distribution with success probability $p$ and number of failures $m$; $\ghs(\mu,\alpha)$ denotes a generalized hyperbolic secant distribution with mean $\mu$ and precision parameter $\alpha$; $\iga(\alpha,\beta)$ denotes an inverse gamma distribution with mean $\beta/(\alpha-1)$; $\be(\alpha,\beta)$ denotes a beta distribution with mean $\alpha/(\alpha+\beta)$; $\ibe(\alpha,\beta)$ denotes an inverse beta or beta of the second kind distribution with mean $\alpha/(\beta-1)$; $ \gsst(\mu,m)$ denotes a generalized scaled Student $t$ distribution with mean $\mu$ and precision parameter $m$. 

This class, hereafter denoted by NEF-QVF, contains some of the most widely used families of distributions in applied statistics. Our models will be based on this class.

\section{The model}
\label{sec:model}

\subsection{General framework}
\label{sec:genframe}

Let $\bY=\{Y_i\}$ be a set of variables of interest, and let $\bS=\{S_i\}$ and $U$ be latent variables. With these components, we will define a three-level hierarchical model to achieve dependencies within  $\bY$. The dependence can be temporal, as illustrated in Figure~\ref{fig:temporal}, and/or spatial, as illustrated in Figure~\ref{fig:spatial}. On top of this, we want to ensure that the marginal distributions of the $Y_i$'s are all the same and belong to a given exponential family, as described below.

The whole idea is based on the notion of conjugacy for the Bayesian analysis of exponential families, as described in Section~\ref{sec:conjugacy}. We summarize our construction as follows: first, the desired marginal distribution on the $Y_i$'s is assigned to the latent variable $U$, and plays the role of the (conjugate) prior distribution; second, the latent variables $S_i$'s are assumed to  be conditionally independent given $U$, and play the role of the observations that give rise to the likelihood function; finally, the variables of interest   in the set $\bY$ are assumed to be conditionally independent given the $S_i$'s, and the conditional distribution of each $Y_i$ takes the form of the corresponding posterior distribution.

Let $\partial_i$ be the set of ``neighbours'', in a broad sense, of unit $i$. For a temporal dependence model of order $q$, $\partial_i$ would  be described by the set of indexes $\{i-q,\ldots,i-1,i\}$, whereas in a spatial dependence model, $\partial_i$ would be the set of actual neighbours of any order plus the current unit $i$. For a seasonal dependence model of order $q$, if the seasonality of the data is $s$, $\partial_i$ would be given by the set of indexes $\{i-qs,\ldots,i-q,i\}$. For a periodic dependence model of orders $(q_1,\ldots,q_s)$, we define $i=i(r,m)=(r-1)s+m$ for $r=1,2,\ldots$ and $m=1,\ldots,s$. To be specific, for monthly data, $s=12$, $r$ denotes the year and $m$ denotes the month. In this case $\partial_i$ would be the set of indexes $\{i(r,m)-q_m,\ldots,i(r,m)-1,i(r,m)\}$. Spatio-temporal models can also be specified using a suitable definition of the set $\partial_i$ \citep[e.g.][]{nieto:20}. For instance, if $i$ denotes location and $t$ denotes time, $\partial_{i,t}=\{(i,s):s\in\{t-q,\ldots,t-1,t\}\}\cup\{(j,t):j\mbox{ is neighbour of }i\}$  would allow us to define a spatio-temporal dependence model. 

Let us start by choosing the desired marginal distribution of $Y_i$, which we take to be a member of the exponential family of the form (\ref{eq:cfammp}). We then assign this distribution to the latent variable $U$, that is \begin{equation}
\label{eq:u}
p_u(u \mid s_0,n_0)=h(s_0,n_0)
\exp\left\{\theta(u) s_0 -  n_0 M(\theta(u))\right\}
\left|D_\theta(u)\right|. 
\end{equation}

Now, conditional on $U$, the $S_i$'s are independent with densities of the form (\ref{eq:efammp}); that is
\begin{equation}
\label{eq:s}
f(s_i \mid \theta(u),n_i) = b(s_i,n_i)\exp\{\theta(u) s_i - n_iM(\theta(u))\},
\end{equation}
for $i=1,\ldots,m$. 

Finally, conditional on $\bS$, the $Y_i$'s are independent with an exponential family distribution of the form (\ref{eq:cfammppost}), \ie 
\begin{equation}
\label{eq:y}
p_y(y_i\mid s^*_i,n^*_i) = h(s^*_i,n^*_i) \, \exp\left\{\theta(y_i) s^*_i - 
n^*_i M(\theta(y_i))\right\} \left|D_\theta(y_i)\right|,
\end{equation}
for $i=1,\ldots,m$, with 
\begin{equation}
\label{eq:snstar}
s^*_i=s_0+\sum_{j\in \partial_i}s_j \quad\mbox{and}\quad n^*_i=n_0+\sum_{j\in \partial_i}n_j.
\end{equation}

We are now ready to state our main result.
\begin{proposition}
\label{prop:1}
Let $\{Y_i\}$ be a set of random variables whose distribution is characterized by equations \eqref{eq:u}--\eqref{eq:y}. Then, 
\begin{description}
\item[(i)] The marginal distribution of each $Y_i$ is the same as the distribution of $U$, and
$\E(Y_i) = s_0 / n_0$ for all $i$. 
\end{description}
Moreover, if the distribution of the latent variables $\bS$, given by \eqref{eq:s}, belongs to the NEF-QVF class, then
\begin{description}
\item[(ii)] $\mathrm{Var}(Y_i) = V(s_0 /n_0 ) / (n_0 - \nu_2 )$ for all $i$ and, for any pair of random variables (\,$Y_i$, $Y_k$), the correlation induced by the model is given by
$$
\mathrm{Corr}(Y_i,Y_k)=\frac{n_0\left(\sum_{j\in\partial_i\cap\partial_k}n_j\right) + \left(\sum_{j\in\partial_i} n_j\right) \left(\sum_{j\in\partial_k} n_j\right)}
{(n_0+\sum_{j\in \partial_i}n_j)(n_0+\sum_{j\in \partial_k}n_j)}
$$
if $i\neq k$, and $\mathrm{Corr}(Y_i,Y_k)=1$ if $i=k$. 
\end{description}
\end{proposition}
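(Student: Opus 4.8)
The plan is to exploit the hierarchical structure $U \to \bS \to \bY$ together with the conjugacy relationships that were built into equations \eqref{eq:u}--\eqref{eq:y}, and then apply the tower property for expectations and the law of total variance/covariance. For part (i), I would first argue that the marginal of each $Y_i$ equals that of $U$ by a conjugacy argument: conditional on $U=u$, the vector $(s_j)_{j\in\partial_i}$ together with the ``prior'' contribution $(s_0,n_0)$ yields, after integrating out $U$ with its conjugate density \eqref{eq:u}, exactly the predictive distribution of a single new observation from the same conjugate family; by construction this predictive coincides with the density of $U$ itself. In other words, the map ``prior $\to$ data $\to$ posterior-predictive'' returns the prior when the posterior is used as the conditional law of $Y_i$ and everything is integrated back out. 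The identity $\E(Y_i)=s_0/n_0$ then follows immediately from the Diaconis--Ylvisaker result quoted at the end of Section~\ref{sec:conjugacy}, since the marginal law of $Y_i$ is the conjugate law with hyperparameters $(s_0,n_0)$.

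For part (ii), assuming the $S_i$ come from the NEF-QVF class, I would compute the variance of $Y_i$ via $\V(Y_i)=\E\{\V(Y_i\mid \bS)\}+\V\{\E(Y_i\mid \bS)\}$. By Morris's conjugate-family formulas quoted in Section~\ref{sec:nef-qvf}, $\E(Y_i\mid \bS)=s^*_i/n^*_i$ and $\V(Y_i\mid \bS)=\v(s^*_i/n^*_i)/(n^*_i-\nu_2)$; one then takes a further expectation over $\bS$ (themselves obtained by mixing over $U$). Since the invariant marginal is the conjugate law with parameters $(s_0,n_0)$, the answer must collapse to $V(s_0/n_0)/(n_0-\nu_2)$ — indeed this is forced by part (i), so the variance formula is really a corollary of the marginal identity rather than a fresh computation. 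The substantive work is the covariance. For $i\neq k$ I would write
\[
\Cov(Y_i,Y_k)=\E\{\Cov(Y_i,Y_k\mid \bS)\}+\Cov\{\E(Y_i\mid\bS),\E(Y_k\mid\bS)\}
=0+\Cov\!\left(\frac{s^*_i}{n^*_i},\frac{s^*_k}{n^*_k}\right),
\]
the first term vanishing because $Y_i\perp Y_k\mid\bS$. Then $s^*_i/n^*_i$ is a fixed affine function of $\sum_{j\in\partial_i}s_j$ (with the deterministic shift $s_0$ and the deterministic denominator $n^*_i=n_0+\sum_{j\in\partial_i}n_j$), so I need $\Cov(\sum_{j\in\partial_i}s_j,\ \sum_{j\in\partial_k}s_j)$. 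Using again the tower rule over $U$: $\Cov(S_j,S_\ell\mid U)=0$ for $j\neq\ell$ and $\V(S_j\mid U)=n_j V(U)$ (the NEF variance scaling), while $\E(S_j\mid U)=n_j U$; hence $\Cov(S_j,S_\ell)=n_j n_\ell \V(U)$ for $j\neq\ell$ and $\V(S_j)=n_j\E\{V(U)\}+n_j^2\V(U)$. Summing over $j\in\partial_i$, $\ell\in\partial_k$, the cross-covariance of the two sums becomes $\big(\sum_{j\in\partial_i\cap\partial_k}n_j\big)\E\{V(U)\}+\big(\sum_{j\in\partial_i}n_j\big)\big(\sum_{j\in\partial_k}n_j\big)\V(U)$. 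Dividing by $n^*_i n^*_k$ and then normalising by $\sqrt{\V(Y_i)\V(Y_k)}$ — using the already-established $\V(Y_i)=V(s_0/n_0)/(n_0-\nu_2)$ — should produce the stated expression, provided the pieces $\E\{V(U)\}$ and $\V(U)$ assemble correctly; here I would invoke Morris's identity $\V(U)=\V(\mu\mid s_0,n_0)=\v(s_0/n_0)/(n_0-\nu_2)$ for the conjugate law, and the quadratic-variance-function identity to evaluate $\E\{V(U)\}=\E\{\nu_0+\nu_1 U+\nu_2 U^2\}$ in closed form.

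The main obstacle I anticipate is precisely this last bookkeeping step: carefully evaluating $\E\{V(U)\}=\nu_0+\nu_1\E(U)+\nu_2\E(U^2)=\v(s_0/n_0)+\nu_2\V(U)$, substituting $\V(U)=\v(s_0/n_0)/(n_0-\nu_2)$, and checking that all the $\v(s_0/n_0)$ and $(n_0-\nu_2)$ factors cancel so that the final correlation depends on the $n$'s alone — as the clean statement of the proposition demands. It is a routine but error-prone algebraic reconciliation, and the elegance of the result (no variance-function terms survive) is a good internal consistency check that the NEF-QVF scaling has been applied correctly at every level of the hierarchy. The case $i=k$ is trivial. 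One should also note in passing that nothing in the argument requires $\partial_i$ to have any particular combinatorial structure, which is what makes the construction cover MA, seasonal, periodic, spatial and spatio-temporal dependence uniformly.
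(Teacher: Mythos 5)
Your proposal is correct and follows essentially the same route as the paper: part (i) via the Bayes marginalization identity (prior equals posterior mixed over the prior predictive), and part (ii) via the law of total covariance conditioning first on $\bS$ and then on $U$, with the decisive algebraic identity $\E\{\v(U)\}=\v(s_0/n_0)+\nu_2\V(U)=n_0\V(U)$ that you flag as the ``bookkeeping step'' being exactly the cancellation the paper relies on. The only cosmetic difference is that you sum $\Cov(S_j,S_\ell)$ over pairs $(j,\ell)$ while the paper splits $\partial_i$ and $\partial_k$ into the intersection and the two set differences; the two computations are identical.
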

\begin{proof}
	
\noindent \textbf{(i)} Using the notation and definitions of Section~\ref{sec:expfam}, we first note that
$$
p_\mu(\mu \mid s_0,n_0) =  
\int p_\mu(\mu \mid s^*_i,n^*_i) \, f(s_n \mid s_0, n_0) \, \nu(\d s)
$$
where
$$
f(s_n \mid s_0, n_0) = 
\int f(s_n \mid \theta(\tilde{\mu}),n) \, p_\mu(\tilde{\mu} \mid s_0,n_0) \, \d \tilde{\mu}
$$
is the prior predictive density of $S_n$. Hence,
$$
p_\mu(\mu \mid s_0,n_0) =  \int p_\mu(\mu \mid s^*_i,n^*_i) \, 
\left\{ \int f(s_n \mid \theta(\tilde{\mu}),n) \, p_\mu(\tilde{\mu} \mid s_0,n_0) \, \d \tilde{\mu} \right\} \nu(\d s).
$$
Now, identifying $y_i$, $s_i$, $n_i$ and $u$ with $\mu$, $s_n$, $n$ and $\tilde{\mu}$, respectively, and using Fubini's theorem, this latter expression becomes
$$
p_{y_i}(y_i \mid s_0,n_0) =  \int \int p_{y_i}(y_i \mid s^*_i,n^*_i) \, 
f(s_i \mid \theta(\tilde{\mu}),n_i) \, p_u(u \mid s_0,n_0) \, \d u \, \nu(\d s),
$$
which is precisely the marginal distribution of $Y_i$ obtained from the joint distribution \linebreak
$p(y_i, s_i, u \mid n_i, s_0,n_0)$ implied by the hierarchical structure \eqref{eq:u}--\eqref{eq:y}.

This shows that the marginal distribution of $Y_i$ is the same as the distribution of $U$.
That $\E(Y_i) = s_0 / n_0$ then follows from the result of  \cite{diaconis&ylvisaker:79} mentioned at the end of Section~\ref{sec:conjugacy}.

\vspace{1ex}

\noindent \textbf{(ii)} It follows from the results of Section~\ref{sec:expfam} that 
$\E(S_i\mid u)=n_i u$, $\V(S_i\mid u)=n_i\v(u)$, $\E(U)=s_0/n_0$ and $\V(U)=\v(s_0/n_0)/(n_0-\nu_2)$. 
Note that $\E(Y_i\mid\bs)$ and $\V(Y_i\mid\bs)$ have the same form as the corresponding moments of $U$ but with $s_0$ and $n_0$ replaced by~$s_i^*$ and~$n_i^*$. 

We will compute the marginal covariance of $Y_i$ and $Y_k$ using the ``law of total covariance'', $\Cv(Y_i,Y_k)=\E\{\Cv(Y_i,Y_k\mid\bS)\}+\Cv\{\E(Y_i\mid\bS),\E(Y_k\mid\bS)\}$. Note that the first term is zero due to conditional independence, and so $\Cv(Y_i,Y_k)=\Cv(S_i^*/n_i^*,S_k^*/n_k^*)$. We split $\partial_i$ and $\partial_k$ into two disjoint sets, $\partial_i=(\partial_i\cap\partial_k)\cup(\partial_i-\partial_k)$ and $\partial_k=(\partial_i\cap\partial_k)\cup(\partial_k-\partial_i)$, such that $(\partial_i\cap\partial_k)\cap(\partial_i-\partial_k)\cap(\partial_k-\partial_i)=\emptyset$. We can thus rewrite the covariance as $\Cv(Y_i,Y_k) = \{\V(A)+\Cv(A,C)+\Cv(B,A)+\Cv(B,C)\} / (n_i^*n_k^*)$, where 
$$A=\sum_{j\in\partial_i\cap\partial_k}S_j,\quad B=\sum_{j\in\partial_i-\partial_k}S_j,\quad C=\sum_{j\in\partial_k-\partial_i}S_j.$$
We can now compute each of the elements of the previous sum separately, again using the ``law of total (co)variance''.

First, $\V(A)=\E\{\V(A\mid U)\}+\V\{\E(A\mid U)\}$, which after some algebra becomes $\V(A) = \left(\sum_{j\in\partial_i\cap\partial_k}n_j\right)\E\{\v(U)\}+\left(\sum_{j\in\partial_i\cap\partial_k}n_j\right)^{\!\! 2} \V(U)$. Substituting the form of the quadratic variance function and taking expectations,  after some algebra we obtain $\E\{\v(U)\}=n_0\V(U)$. On the other hand, $\Cv(A,C)=\E\{\Cv(A,C\mid U)\}+\Cv\{\E(A\mid U),\E(C\mid U)\}$. The first term vanishes due to conditional independence , so  $$\Cv(A,C)=\left(\sum_{j\in\partial_i\cap\partial_k}n_j\right)\left(\sum_{j\in\partial_k-\partial_i}n_j\right)\V(U).$$ The computations for $\Cv(B,A)$ and $\Cv(B,C)$ are analogous. Substituting all these expressions back in the expression for the covariance of $Y_i$ and $Y_k$, we obtain $\Cv(Y_i,Y_k)=\V(U)/(n_i^*n_k^*)\left\{n_0\left(\sum_{j\in\partial_i\cap\partial_k}n_j\right)+\right.$ $\left.\left(\sum_{j\in\partial_i}n_j\right)\left(\sum_{j\in\partial_k}n_j\right)\right\}$. Finally, upon recalling that $\V(U)=\V(Y_i)=\V(Y_k)$, we obtain the result. 
\end{proof}

\vspace{2ex}

Proposition \ref{prop:1} tells us that all the $Y_i$'s have the same marginal distribution, and therefore the same mean and variance. Additionally, the correlation between any two variables, say $Y_i$ and $Y_k$ with $i\neq k$, is fully characterized and it has a simple expression. This correlation can be split into two parts: first, a function of the marginal parameter $n_0$ and the shared parameters $n_j$ in the sets $\partial_i$ and $\partial_k$; and second, a function of the two whole sets of parameters $n_j$ in the sets $\partial_i$ and $\partial_k$. The sum of these two components is then normalized so it lies in the interval $[0,1]$. Moreover, if $n_i=n$ for all $i$, then the set $\bY$ becomes strictly stationary.

\subsection{Models based on the NEF-QVF class}
\label{sec:examples}

In what follows we describe six families of models. The name given to each of these models has the form ``Family~1 - Family~2'', with Family~1 indicating the marginal distribution of $Y_i$ and Family 2 referring to the associated NEF-QVF used for the latent variables $S_i$. All conjugate families are characterised by equations \eqref{eq:u}, \eqref{eq:s} and \eqref{eq:y}. We provide the specific forms of the functions involved in each case. Expressions for $s_i^*$ and $n_i^*$, $i=1,\ldots,m$ are given in \eqref{eq:snstar}. 

\bigskip\noindent\textbf{Normal - Normal}

\smallskip\noindent
This family is characterised by the functions
$b(s_i,n_i)=\left(2\pi n_i\right)^{-1/2}\exp\left\{-{s_i^2}/{(2n_i)}\right\}$, $h(s_0,n_0)=\left(2\pi/n_0\right)^{-1/2}\exp\left\{-s_0^2/(2n_0)\right\}$, $\theta(u)=u$, $M(\theta(u))=u^2/2$ and $D_\theta(u)=1$.
To construct a process $\bY$ with normal marginal distributions, we define our three-level hierarchical model for $i=1,\ldots,m$ as 
\begin{equation}
\label{eq:normalm}
U\sim\no(s_0/n_0,n_0),\quad [S_i\mid U] \sim \no(n_i u,1/n_i)\quad\mbox{and }\quad [Y_i\mid\bS] \sim \no(s_i^*/n_i^*, n_i^*). 
\end{equation}

\bigskip\noindent\textbf{Gamma - Poisson}

\smallskip\noindent
In this case, $h(s_0,n_0)=n_0^{s_0}/\Gamma(s_0)$, $\theta(u)=\log(u)$, $M(\theta(u))=u$, $D_\theta(u)={1}/{u}$ and $b(s_i,n_i)=n_i^{s_i}/s_i!$. 
A process $\bY$ with gamma marginal distributions can be defined for $i=1,\ldots,m$ as follows:
\begin{equation}
\label{eq:gamma1m}
U\sim\ga(s_0,n_0),\quad [S_i\mid U] \sim \po(n_i u)\quad\mbox{and}\quad [Y_i\mid\bS] \sim \ga(s_i^*, n_i^*). 
\end{equation}

\bigskip\noindent\textbf{Inverse Gamma - Gamma}

\smallskip\noindent
Now $h(s_0,n_0)=s_0^{n_0}/\Gamma(n_0)$, $\theta(u)=-{1}/{u}$, $M(\theta(u))=\log(u)$,  $D_\theta(u)={1}/{u}$ and $b(s_i,n_i)=s_i^{n_i-1}/\Gamma(n_i)$.
A process $\bY$ with inverse gamma marginal distributions would be defined by means of the following hierarchical specification for $i=1,\ldots,m$:
\begin{equation}
\label{eq:igammam}
U\sim\iga(n_0+1,s_0),\quad [S_i \mid U] \sim \ga(n_i,1/u)\quad\mbox{and} \quad 
[Y_i \mid \bS] \sim \iga(n_i^*+1, s_i^*).
\end{equation}

\bigskip\noindent\textbf{Beta - Binomial}

\smallskip\noindent
This family is characterised by the functions $h(s_0,n_0)=\Gamma(n_0)/\{\Gamma(s_0)\Gamma(n_0-s_0)\}$, $\theta(u)=\log\left\{{u}/{(1-u)}\right\}$, $M(\theta(u))=-\log(1-u)$, $D_\theta(u)={1}/{\{u(1-u)\}}$ and $b(s_i,n_i)={n_i\choose s_i}$.
If we want to construct a process $\bY$ with beta marginal distributions we can define the three-level hierarchical model for $i=1,\ldots,m$ as
\begin{equation}
\label{eq:beta1m}
U\sim\be(s_0,n_0-s_0),\quad [S_i \mid U] \sim \bin(u,n_i) \quad \mbox{and} \quad 
[Y_i \mid \bS] \sim \be(s_i^*,n_i^*-s_i^*). 
\end{equation}

\bigskip\noindent\textbf{Inverse Beta - Negative Binomial}

\smallskip\noindent
Here, $h(s_0,n_0)=\Gamma(s_0+n_0+1)/\{\Gamma(s_0)\Gamma(n_0+1)\}$, $\theta(u)=\log\left\{{u}/{(u+1)}\right\}$, $M(\theta(u))=\log(u+1)$, $D_\theta(u)={1}/{\{u(u+1)\}}$ and $b(s_i,n_i)={n_i+s_i-1\choose s_i}$. 
To construct a process $\bY$ with inverse beta marginal distributions we can define the three-level hierarchical model for $i=1,\ldots,m$ as 
\begin{equation}
\label{eq:ibetam}
U \sim \ibe(s_0,n_0+1),\quad [S_i\mid U] \sim \nbi(u/(u+1),n_i)\quad\mbox{and}\quad [Y_i\mid\bS] \sim \ibe(s_i^*,n_i^*+1). 
\end{equation}

\bigskip\noindent\textbf{Generalized Scaled Student - Generalized Hyperbolic Secant}

\smallskip\noindent
Finally, in this case $h(s_0,n_0)$ is a normalising constant, $\theta(u)=\tan^{-1}(u)$, $M(\theta(u))=({1}/{2})\log\left(1+u^2\right)$, $D_\theta(u)=\left(1+u^2\right)^{-1}$ and $b(s_i,n_i)=\{{2^{n_i-2}}/{\Gamma(n_i)}\} \prod_{j=0}^\infty\left\{1+{s_i^2}/{(n_i+2j)^2}\right\}^{-1}$. 
If we want to construct a process $\bY$ with generalized scaled Student $t$ marginal distributions, we need to specify the following three-level hierarchical model for $i=1,\ldots,m$:
\begin{equation}
\label{eq:sstm}
U\sim \gsst(s_0/n_0,n_0),\quad [S_i\mid U] \sim \ghs(n_i u,1/n_i)\quad\mbox{and }\quad [Y_i\mid\bS] \sim  \gsst(s_i^*/n_i^*, n_i^*). 
\end{equation}

\section{Numerical analyses}
\label{sec:data}

\subsection{Synthetic data}

In order to show how the correlation induced by our construction may look like, we will use the temporal dependence structure of order $q=2$ described by Figure~\ref{fig:temporal} and the simple spatial dependence structure described by Figure~\ref{fig:spatial}. 

For the temporal model, the neighbouring sets are $\partial_1=\{1\}$, $\partial_2=\{1,2\}$, and $\partial_i=\{i,i-1,i-2\}$ for $i=3,4\ldots$. We computed $\Cor(Y_i,Y_k)$ for $i=1$ and $k=1,2,\ldots,16$ and considered several scenarios for the parameters $n_j$, $j=0,1,\ldots$. Specifically, we take fixed values $n_j=1$, for $j=1,2,\ldots$ and vary $n_0\in\{0.01,0.1,1,10\}$. The correlations induced by these scenarios are shown in the left-hand panel of Figure~\ref{fig:tcor}. The correlation starts at one, for $k=1$, and decreases to a constant value, for $k=2,3$, and decreases again to a lower constant value for $k=4,5,\ldots$. The effect of $n_0$ is inverse, the correlation decreases as $n_0$ increases. In the same panel we also include a scenario that shows that the correlation is not necessarily monotonic; this can be achieved, say, by setting $n_0=0.5$ and varying values of $n_j$ for $j=1,2,\ldots$. The right-hand panel of Figure~\ref{fig:tcor} shows five paths obtained with simulated values of the parameters taken from $n_0\sim\un(0,2)$, a uniform distribution on the interval (0,2), and $[ n_j\mid a,b ]` \sim \ga(a,b)$ for $j=1,\ldots,16$, with $a\sim\ga(1,1)$ and $b\sim\ga(1,1)$. These paths illustrate the flexibility of the correlation induced by our construction. 

For the spatial model, the neighbouring sets are $\partial_1=\{1,2,3\}$, $\partial_2=\{1,2,3\}$, $\partial_3=\{1,2,3,4,5\}$, $\partial_4=\{3,4,5\}$ and $\partial_5=\{3,4,5\}$. Again, we computed $\Cor(Y_i,Y_k)$ for $i,k=1,2,\ldots,5,$ and considered several scenarios for values of the parameters. The $5\times 5=25$ correlations are presented as vertical lines in Figures~\ref{fig:scor1} and~\ref{fig:scor2}, where, for each value of $k=1,\ldots,5$ on the horizontal axis, we include five vertical lines which correspond to $i=1,\ldots,5$ and are shown as 5 line types. 
In Figure \ref{fig:scor1} we took $n_j=1$ for $j=1,\ldots,5$, and three values $n_0\in\{0.1,1,10\}$ shown in the top-left, top-right and bottom-left panels, respectively. In all cases, region $k=3$ is the one with higher correlations, because it is the one with larger number of neighbours. Larger values of $n_0$ imply smaller correlations. The bottom-right panel illustrates the case where $n_0=1$ and different $n_j$'s for $j>0$. Figure \ref{fig:scor2} includes four scenarios, one in each panel, obtained with simulated values of the parameters taken from $n_0\sim\un(0,2)$ and $[ n_j \mid a,b ] \sim \ga(a,b)$ for $j=1,\ldots,5$, with $a\sim\ga(1,1)$ and $b\sim\ga(1,1)$. Again, the highest correlations involve region $k=3$; however, other regions also show large correlations.

\subsection{Real life application}

Mexico's National Institute of Statistics and Geography (INEGI) carries out a nation-wide continuous survey to study the occupation and employment of the population. They define the unemployment rate as the percentage of people, 15 years or older, who are looking for a job and have not been able to find it; rates are provided in percentages in a scale from 0 to 100. To illustrate the performance of the dependence constructions proposed, we will carry out two analyses: a temporal and a spatial analysis. 

For the temporal analysis, we consider the monthly unemployment rates ($Y_i$) from January 2006 to September 2019, that is, for $i=1,\ldots,165$. The data are shown as solid lines in the second to fourth panels of Figure~\ref{fig:dataT}. From 2009 to 2016 there was a period of high rates. We propose to model the data with an inverse gamma - gamma model; that is, a temporal dependence model with an $\iga(\alpha,\beta)$ marginal distribution, and with latent variables $S_i$ coming from a gamma distribution as in \eqref{eq:igammam}. Temporal dependence was defined with neighbouring sets $\partial_i=\{i-q,\ldots,i-1,i\}$, and with $q\in\{0,1,\ldots,14\}$ for comparison purposes. We implemented a Bayesian analysis with independent prior distributions $\alpha\sim\ga(0.1,0.1)$ and $\beta\sim\ga(0.1,0.1)$; the hierarchical prior for the $n_i$'s was defined as $n_i\mid n_0\simiid\ga(1,n_0)$ and $n_0\sim\ga(1,1)$. 

Posterior inferences were obtained via a Gibbs sampler \citep{GG:84}. We ran two chains with 15,000 iterations, a burn-in period of 5,000 iterations, and a thinning of 5 iterations to reduce the autocorrelation within each chain. The convergence of the chains with these specifications was satisfactory according to the trace and ergodic means plots. Since the involved distributions that define our model are all of standard form, inference can be carried out in {\sf R} through the JAGS software \citep{plummer:18}. 

We assessed the model fit by computing the deviance information criterion (DIC) originally introduced by \cite{spiegelhalter&al:02}. Smaller values of DIC are preferred. These values for varying $q\in\{1,\ldots,14\}$ are shown in the top-left panel of Figure~\ref{fig:dataT}. The largest improvement in the DIC scale is given from the model with $q=0$ (exchageability) to the model with $q=1$. The values continue to fall until $q=11$ (best model) and then increase for $q\geq 12$. 

The impact on the fit for some choices of $q$ can be seen in the second to fourth panels of Figure~\ref{fig:dataT}. Point predictions (thick solid lines) and 95\% credible intervals (dotted lines) are shown. For $q=0$ the $Y_i$'s become exchangeable with the same marginal distribution so the prediction is constant, whereas for a temporal dependence of order $q=1$, the predictions follow the path of the data closely but with a somewhat larger uncertainty (wider credible intervals). However, for the best-fitting model, obtained with $q=11$, the point predictions are smoother and with considerably less uncertainty (narrower credible intervals). 

To place the performance of our model in context, we also fitted a dynamic linear model \citep{west&harrison:97} of the form $Y_i\sim\no(\mu_i,\tau)$ and $\mu_i\sim\no(\mu_{i-1},\tau_\mu)$ for $i=1,\ldots,165$ with prior distributions $\mu_0\sim\no(0,0.1)$, $\tau\sim\ga(0.1,0.1)$ and $\tau_\mu\sim\ga(0.1,0.1)$. The value of the DIC obtained is $-110$, which is higher than the corresponding values obtained with our model, which range from $-950$ to $-550$. 

For the spatial analysis we consider the unemployment rates ($Y_i$) for all Mexican States in the fourth trimester of 2019; that is, $i=1,\ldots,32$. The data are shown in the top panel of Figure~\ref{fig:dataS}. The states with higher unemployment rates are Baja California Sur (ID-3), Chihuahua (ID-5) and Sonora (ID-26) in the north, Edomex (ID-15) and Queretaro (ID-22) in the center, and Tabasco (ID-27) in the south. We propose to use the same model as above, the inverse gamma - gamma model, but with a spatial dependence structure, $\partial_i$, given by the actual neighbouring states. A Bayesian analysis was implemented with the same prior distributions and with the same MCMC settings as those used for the temporal data. 

In the bottom panel of Figure~\ref{fig:dataS} we show posterior predictive estimates. Compared to the actual data (top panel), our predictions are smoother as a consequence of the spatial dependence. The new map clearly shows clustered regions, with the north-west part of Mexico showing higher unemployment rates than the south-centre region. From Chiapas (ID-7) and Tabasco (ID-27) towards the Yucatan peninsula, the predicted rates remain almost the same as the observed rates. 

Finally, for the sake of comparison, we also fitted a normal CAR model. This is defined as $Y_i\mid\bY_{-i}\sim\no\left(\rho\sum_{j=1}^nI(i\smile j)y_j/r_i,r_i\,\tau\right)$ with $r_i$ the number of neighbours in each state $i=1,\ldots,32$, with prior distributions for the association parameter $\rho\sim\un(0,1)$ and for the precision $\tau\sim\ga(0.1,0.1)$. The DIC values obtained with our spatial model and with the CAR model are almost the same, $-45.9$ and $-45.6$, respectively. However, the predicted map produced with the CAR model is considerably different to the one produced with our spatial model. 

Regarding computational times, our temporal dependence model took 45 seconds each run as compared to the dynamic linear model which took only 5 seconds to run. The spatial dependence model took 3.5 minutes, whereas the CAR model took 5 seconds to run. Analysis were run in an Intel Core i7 with 16 GB of RAM. The data and code for these analyses and the fittings obtained for the competitors (dynamic and CAR models) are provided as Supplementary Material.

\section{Concluding remarks}
\label{sec:conclusions}

In this paper, we have described a general procedure to introduce dependence structures on a set of random variables whose distribution belongs to a class of models which are conjugate to natural exponential families with quadratic variance functions. Such dependence is induced by means of a set of latent variables within a three-level hierarchical model, and is fully characterized via the correlation function. 

This general construction is based on the well-known Bayesian notion of conjugacy. We used the conjugate prior distribution of the mean parameter in the first level of the hierarchy, a sampling model in the NEF-QVF class for the second level, and the corresponding (conjugate) posterior in the third level. This allows a particularly neat treatment because all the required first and second moments can be calculated in closed form.

While we focused on the important NEF-QVF class written in terms of the mean parametrisation (a class that contains some of the most commonly used models), the same conjugate construction can be used with other parametrisations and other exponential families. Consider, for instance, the inverse gamma - gamma model \eqref{eq:gamma1m}. If we took $U'=1/U$ and $Y_i'=1/Y_i$, then the model would be rewritten as 
$$U'\sim\ga(n_0+1,s_0),\quad [S_i\mid U'] \sim \ga(n_i,u')\quad\mbox{and}\quad [Y_i'\mid\bS] \sim \ga(n_i^*+1, s_i^*).$$
As another example, the inverse beta-negative binomial model \eqref{eq:ibetam} could be similarly rewritten if we applied the same transformations as above to obtain
$$U'\sim\be(n_0+1,s_0),\quad [S_i\mid U'] \sim \nbi(n_i,u')\quad\mbox{and}\quad [Y_i'\mid\bS] \sim \be(n_i^*+1,s_i^*).$$
These transformations yield alternative representations of the marginal gamma and marginal beta constructions, respectively. However, in these cases the correlation between any two random variables may not be available in closed form.

As a matter of fact, the procedure described in this paper can also be applied beyond exponential family settings, as long as a conjugate structure is available. For example, \cite{nieto&huerta:17} defined a spatio-temporal model with Pareto marginal distributions based on the conjugacy between the Pareto and inverse Pareto distributions. In particular they took $U\sim\pa(n_0,s_0)$, $[S_i\mid U]\sim\ipa(n_i,1/u)$ and $[Y_i\mid\bS]\sim\pa(n_i^*,s_i^*)$ with $n_i^*$ as in \eqref{eq:snstar} and $s_i^*=\max\left(s_0,\max_{j\in\partial_i}\{s_j\}\right)$. The correlation induced by this construction has a similar behaviour to the one shown here and can be computed in closed form. 

Maintaining a desired marginal distribution in a set of dependent variables is useful in applications where a particular model has some desirable features. For instance, the Dirichlet process can be defined through a sequence of beta random variables. To define dependent Dirichlet processes, we could use the beta construction \eqref{eq:beta1m} to maintain the same marginal distribution and introduce dependence \citep[e.g.][]{nieto&al:12}. 

Finally, we note that both simulating these processes and obtaining posterior samples for Bayesian
inference on the parameters of the models can, in most cases, be carried out without much difficulty due to their hierarchical and conjugate structure.

\section*{Acknowledgements} 

The authors would like to thank two anonymous reviewers whose detailed comments helped to improve the presentation of this paper. Most of this work was carried out while the second author was visiting the Department of Statistics at ITAM. He is very grateful to this institution for their hospitality. Partial support from Mexico's \textit{Sistema Nacional de Investigadores} is gratefully acknowledged. The first author was also supported by \textit{Asociaci\'on Mexicana de Cultura, A.C.}

\bibliographystyle{natbib}

\vspace{10ex}

\begin{table}[h]
\centering
{\small
\begin{tabular}{lllll} \hline\hline
\textbf{Family} & \textbf{Notation} &  \textbf{Variance fun.} & \textbf{Domain} & \textbf{Conjugate fam.} \\ \hline
Normal & $\no(\mu,1)$ & $V(\mu) = 1$ & $\MC = \RR$ & $\no(s_0/n_0,n_0)$ \\
Poisson & $\mbox{Po}(\mu)$ & $V(\mu) = \mu$ & $\MC = \RR^{+}$ & $\ga(s_0,n_0)$ \\
Gamma & $\ga(1,1/\mu)$ & $V(\mu) = \mu^{2}$ & $\MC = \RR^{+}$ & $\iga(n_0+1,s_0)$ \\
Binomial & $\bin(\mu,1)$ & $V(\mu) = \mu-\mu^2$ & $\MC = (0,1)$ & $\be(s_0,n_0-s_0)$ \\
Neg. Bin. & $\nbi(\mu/(1+\mu),1)$ & $V(\mu) = \mu+\mu^2$ & $\mathcal{M} = \RR^{+}$ & 
$\ibe(s_0,n_0+1)$ \\
Hyper. Sec. & $\ghs(\mu,1)$ & $V(\mu)=1+\mu^{2}$ & $\MC = \RR$ & $ \gsst(s_0/n_0,n_0)$ \\
\hline\hline
\end{tabular}}
\caption{The NEF-QVF class, notation and relevant characteristics.}
\label{tab:nef}
\end{table}

\newpage
\bigskip

\begin{figure}[h]
\setlength{\unitlength}{0.8cm}
\begin{center}
\begin{picture}(20,8)
\put(9.8,7.6){$U$} 
\put(10,7.3){\vector(-2,-1){5}}
\put(10,7.3){\vector(-1,-1){2.5}}
\put(10,7.3){\vector(0,-1){2.5}}
\put(10,7.3){\vector(1,-1){2.5}}
\put(10,7.3){\vector(2,-1){5}}
\put(4.6,4.1){$S_1$}
\put(7.2,4.1){$S_2$}
\put(9.8,4.1){$S_3$}
\put(12.4,4.1){$S_4$}
\put(15,4.1){$S_5$}
\put(4.6,1.1){$Y_1$}
\put(7.2,1.1){$Y_2$}
\put(9.8,1.1){$Y_3$}
\put(12.4,1.1){$Y_4$}
\put(15,1.1){$Y_5$}
\put(4.8,3.9){\vector(0,-1){2.2}}
\put(4.8,3.9){\vector(1,-1){2.3}}
\put(4.8,3.9){\vector(2,-1){4.8}}
\put(7.4,3.9){\vector(0,-1){2.2}}
\put(7.4,3.9){\vector(1,-1){2.3}}
\put(7.4,3.9){\vector(2,-1){4.8}}
\put(10,3.9){\vector(0,-1){2.2}}
\put(10,3.9){\vector(1,-1){2.3}}
\put(10,3.9){\vector(2,-1){4.8}}
\put(12.6,3.9){\vector(0,-1){2.2}}
\put(12.6,3.9){\vector(1,-1){2.2}}
\put(15.2,3.9){\vector(0,-1){2.2}}
\end{picture}
\end{center}
\vspace{-1cm}
\caption{Graphical representation of temporal dependence of order $q=2$.}
\label{fig:temporal} 
\end{figure}
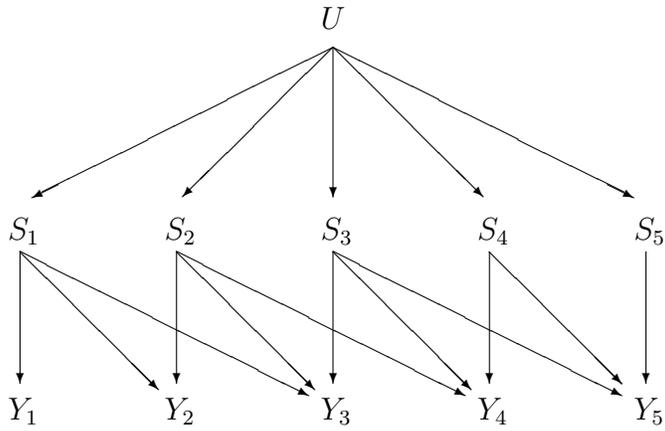

\bigskip
\bigskip
\bigskip

\begin{figure}[h]
\setlength{\unitlength}{0.8cm}
\begin{center}
\begin{picture}(17,8)
\put(14.3,4.4){\oval(4,6)}
\put(14.3,7.4){\line(0,-1){2}}
\put(14.3,1.4){\line(0,1){2}}
\put(12.3,3.4){\line(1,0){4}}
\put(12.3,5.4){\line(1,0){4}}
\put(13,6.3){$Y_{1}$}
\put(15,6.3){$Y_{2}$}
\put(14,4.3){$Y_{3}$}
\put(13,2.3){$Y_{4}$}
\put(15,2.3){$Y_{5}$}

\put(5.8,7.6){$U$} 
\put(6,7.3){\vector(-2,-1){5}}
\put(6,7.3){\vector(-1,-1){2.5}}
\put(6,7.3){\vector(0,-1){2.5}}
\put(6,7.3){\vector(1,-1){2.5}}
\put(6,7.3){\vector(2,-1){5}}
\put(0.6,4.1){$S_1$}
\put(3.2,4.1){$S_2$}
\put(5.8,4.1){$S_3$}
\put(8.4,4.1){$S_4$}
\put(11,4.1){$S_5$}
\put(0.6,1.1){$Y_1$}
\put(3.2,1.1){$Y_2$}
\put(5.8,1.1){$Y_3$}
\put(8.4,1.1){$Y_4$}
\put(11,1.1){$Y_5$}

\put(0.8,3.9){\vector(0,-1){2.2}}
\put(0.8,3.9){\vector(1,-1){2.3}}
\put(0.8,3.9){\vector(2,-1){4.8}}

\put(3.4,3.9){\vector(0,-1){2.2}}
\put(3.4,3.9){\vector(1,-1){2.3}}
\put(3.4,3.9){\vector(-1,-1){2.3}}

\put(6,3.9){\vector(0,-1){2.2}}
\put(6,3.9){\vector(1,-1){2.3}}
\put(6,3.9){\vector(2,-1){4.8}}
\put(6,3.9){\vector(-1,-1){2.3}}
\put(6,3.9){\vector(-2,-1){4.8}}

\put(8.6,3.9){\vector(0,-1){2.2}}
\put(8.6,3.9){\vector(1,-1){2.2}}
\put(8.6,3.9){\vector(-1,-1){2.2}}

\put(11.2,3.9){\vector(0,-1){2.2}}
\put(11.2,3.9){\vector(-1,-1){2.3}}
\put(11.2,3.9){\vector(-2,-1){4.8}}

\end{picture}
\end{center}
\vspace{-1cm}
\caption{Graphical representation of first-order spatial dependence for a five-area region.}
\label{fig:spatial} 
\end{figure}
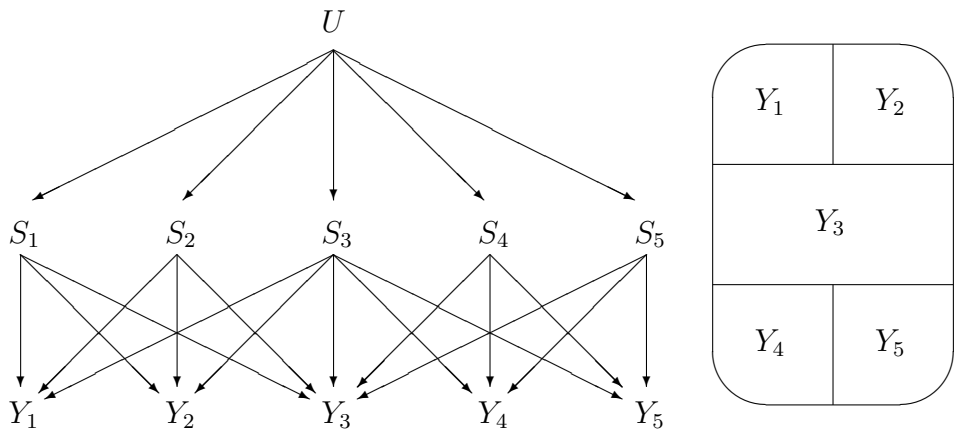

\begin{figure}
\centerline{\includegraphics[scale=0.34]{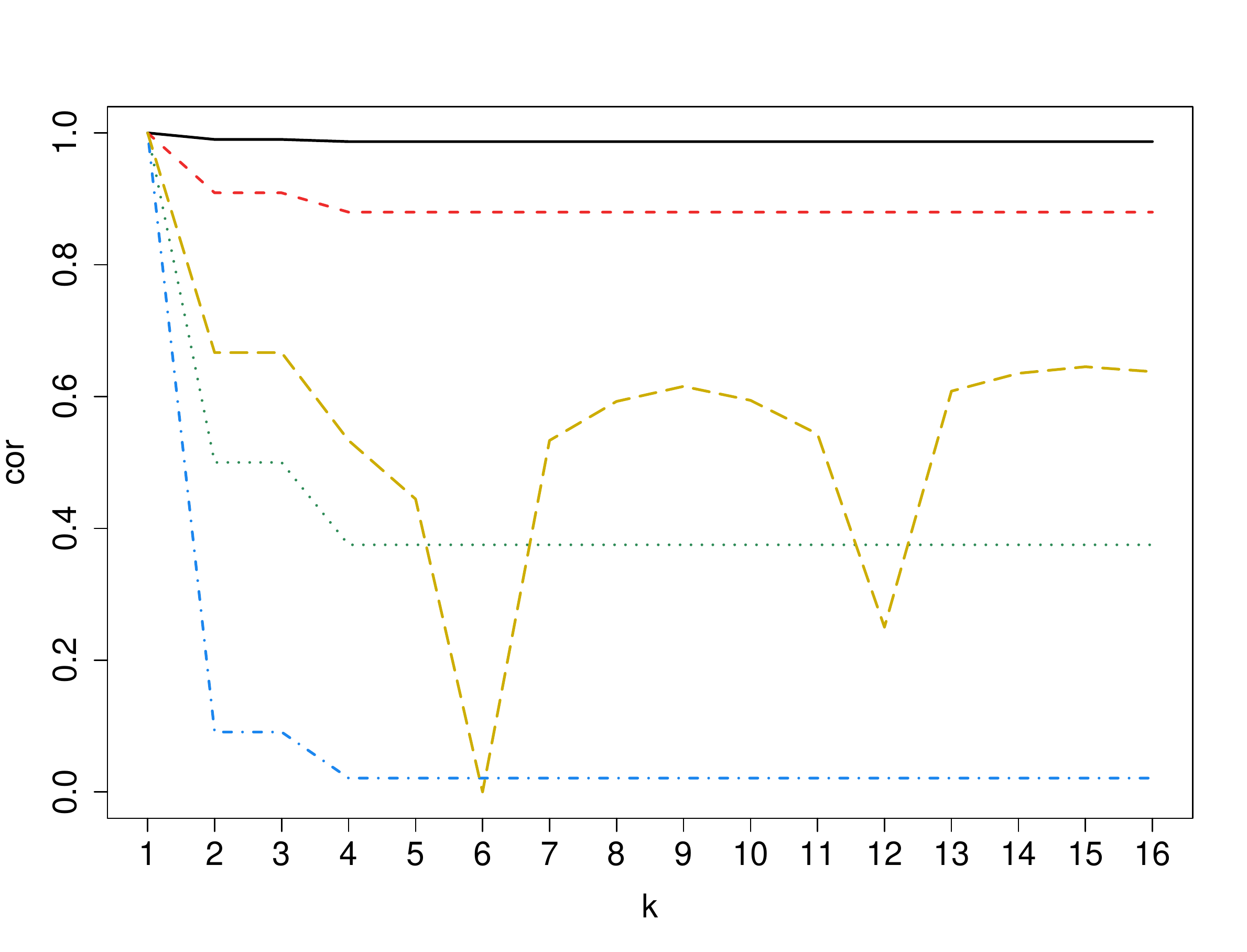}
\includegraphics[scale=0.34]{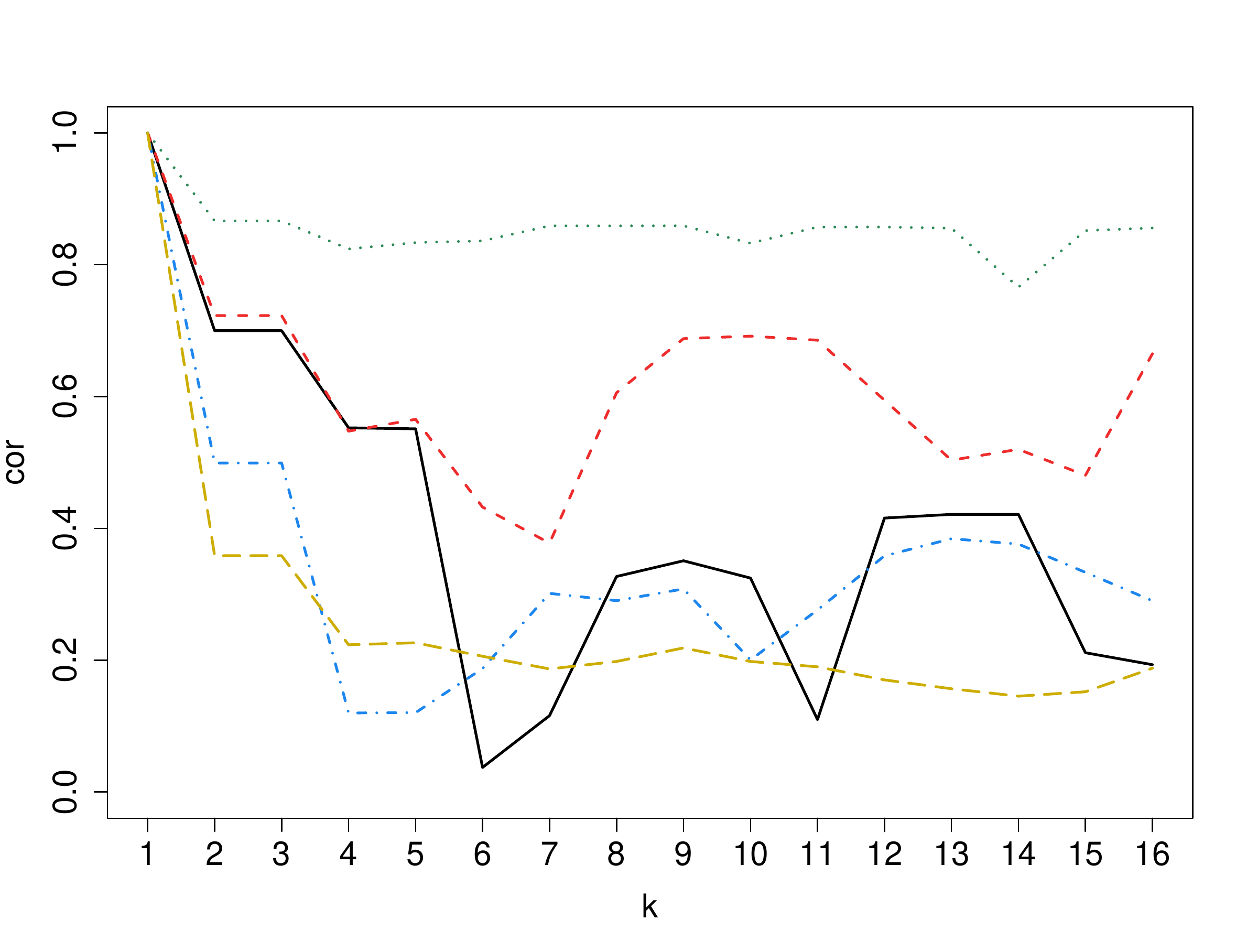}}
\vspace{-2mm}
\caption{{\small Correlation function $\Cor(Y_1,Y_k)$, $k=1,\ldots,16$ for the temporal dependence model of Figure~\ref{fig:temporal}. Left panel: $n_j=1\;\forall j>0$, $n_0=0.01$ (solid line), $n_0=0.1$ (dashed line), $n_0=1$ (dotted line), $n_0=10$ (dotted-dashed line); $\bn=\{1,1,1,0,0,0,2,2,2,0.1,0.1,0.1,5,5,5,1\}$ and $n_0=0.5$ (long dashed line). Right panel: five simulated paths from $n_0\sim\un(0,2)$, $n_j\mid a,b\sim\ga(a,b)$, $j=1,\ldots,16$ with $a\sim\ga(1,1)$ and $b\sim\ga(1,1)$.}}
\label{fig:tcor}
\end{figure}

\begin{figure}
\centerline{\includegraphics[scale=0.34]{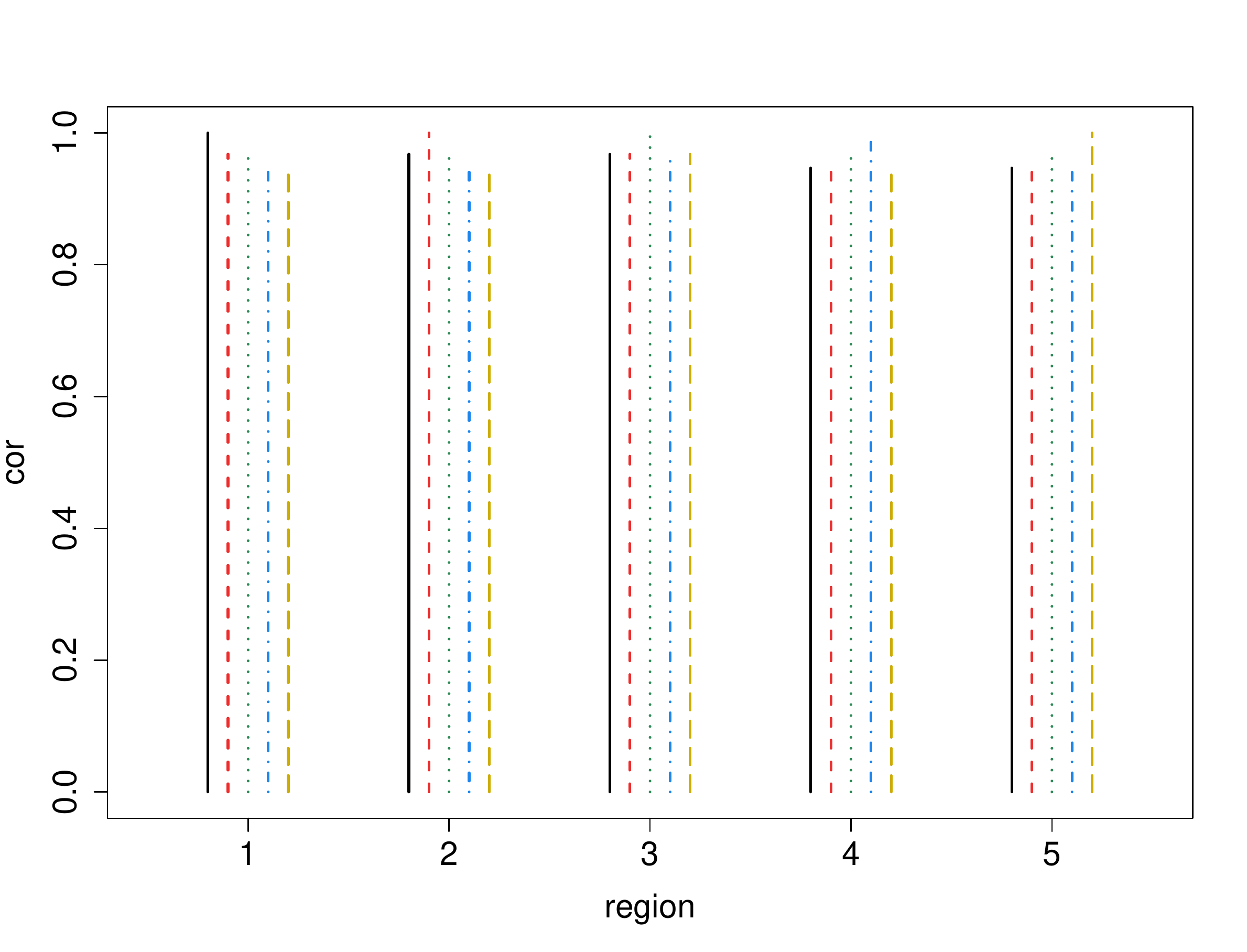}
\includegraphics[scale=0.34]{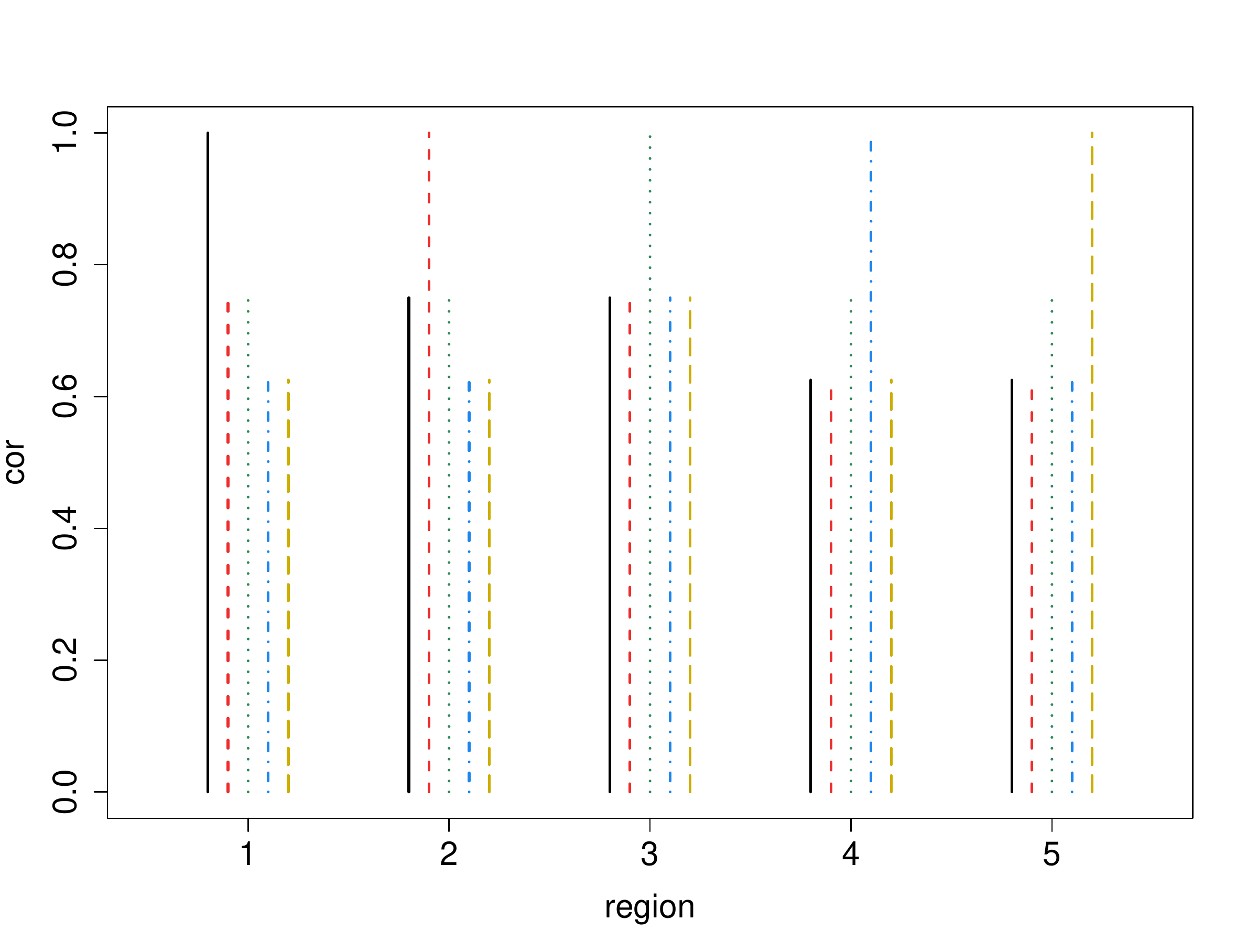}}
\centerline{\includegraphics[scale=0.34]{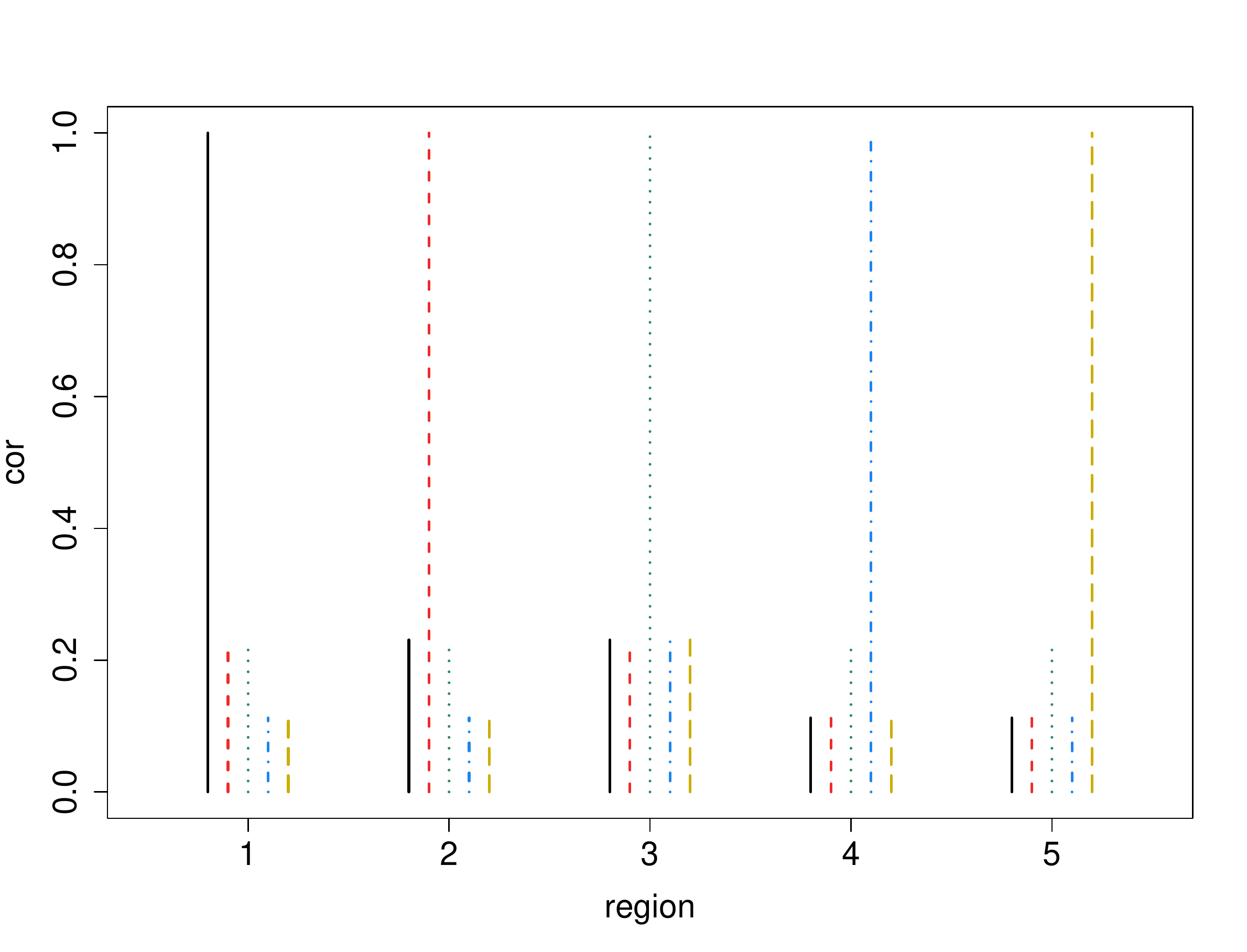}
\includegraphics[scale=0.34]{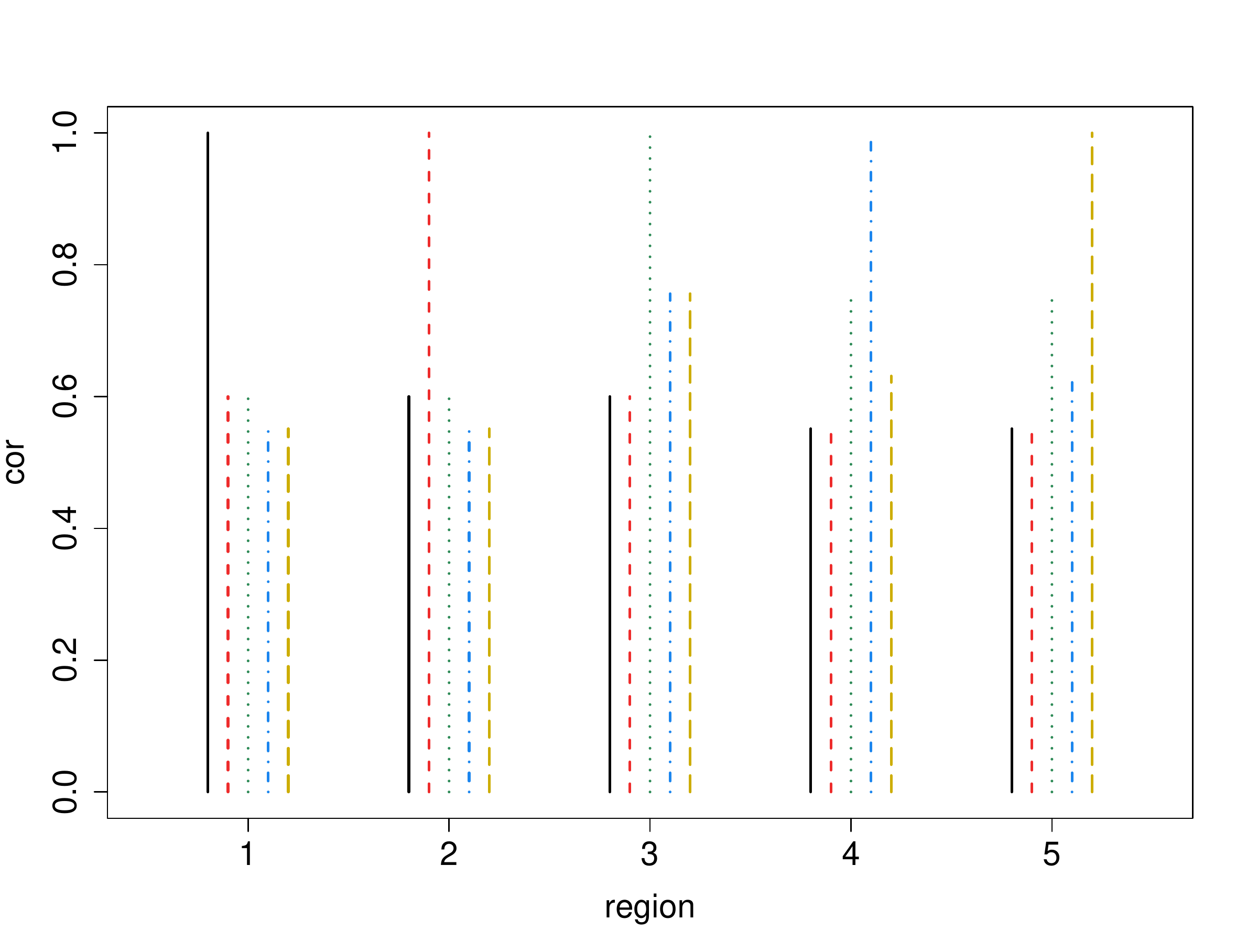}}
\vspace{-2mm}
\caption{{\small Correlation function $\Cor(Y_i,Y_k)$, $i,k=1,\ldots,5$ for the spatial dependence model of Figure~\ref{fig:spatial}. 
$n_j=1\;\forall j>0$, $n_0=0.1$ (top left), $n_0=1$ (top right), $n_0=10$ (bottom left); $\bn=\{0.5,0,1,0.1,2\}$ and $n_0=1$ (bottom right).}}
\label{fig:scor1}
\end{figure}

\begin{figure}
\centerline{\includegraphics[scale=0.34]{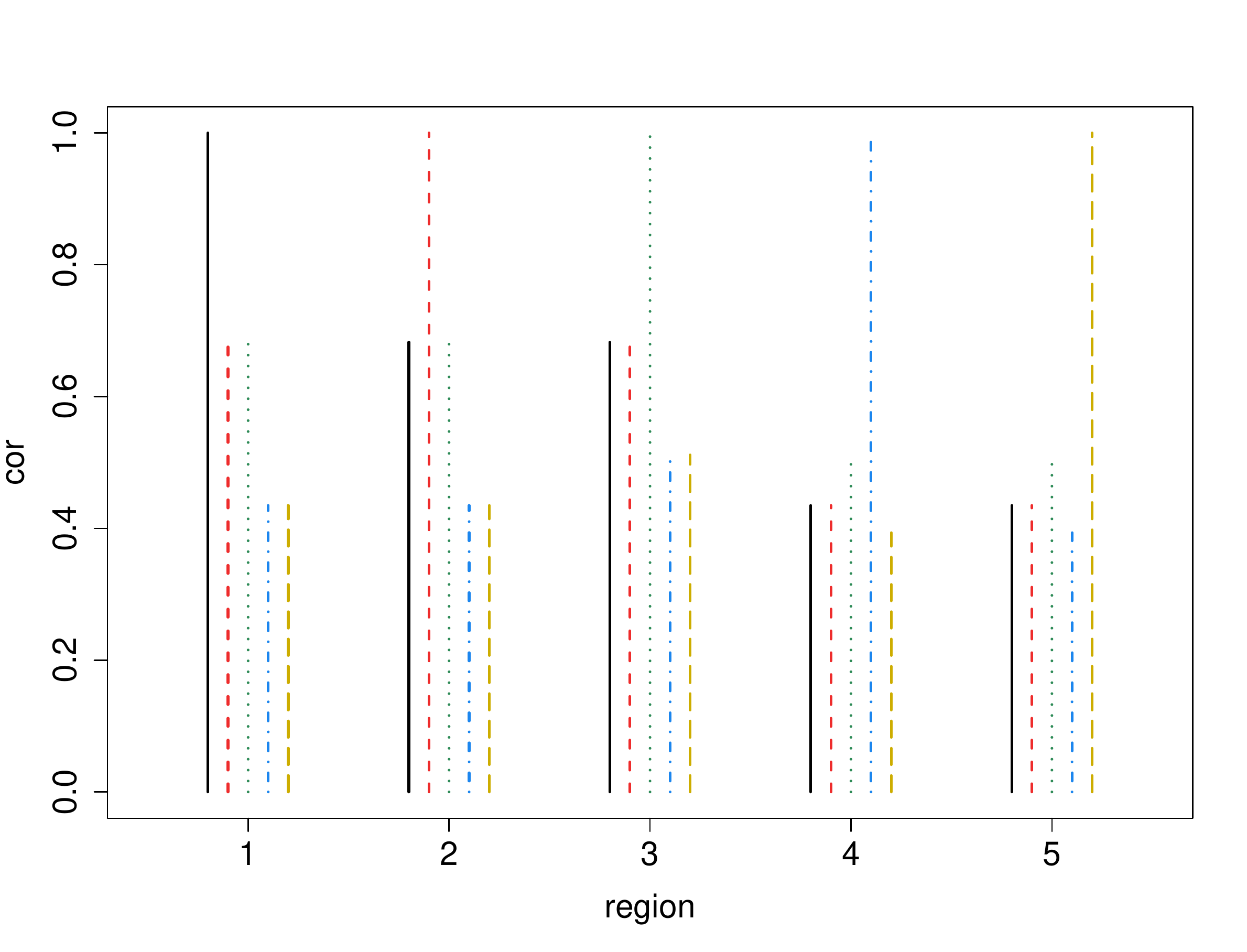}
\includegraphics[scale=0.34]{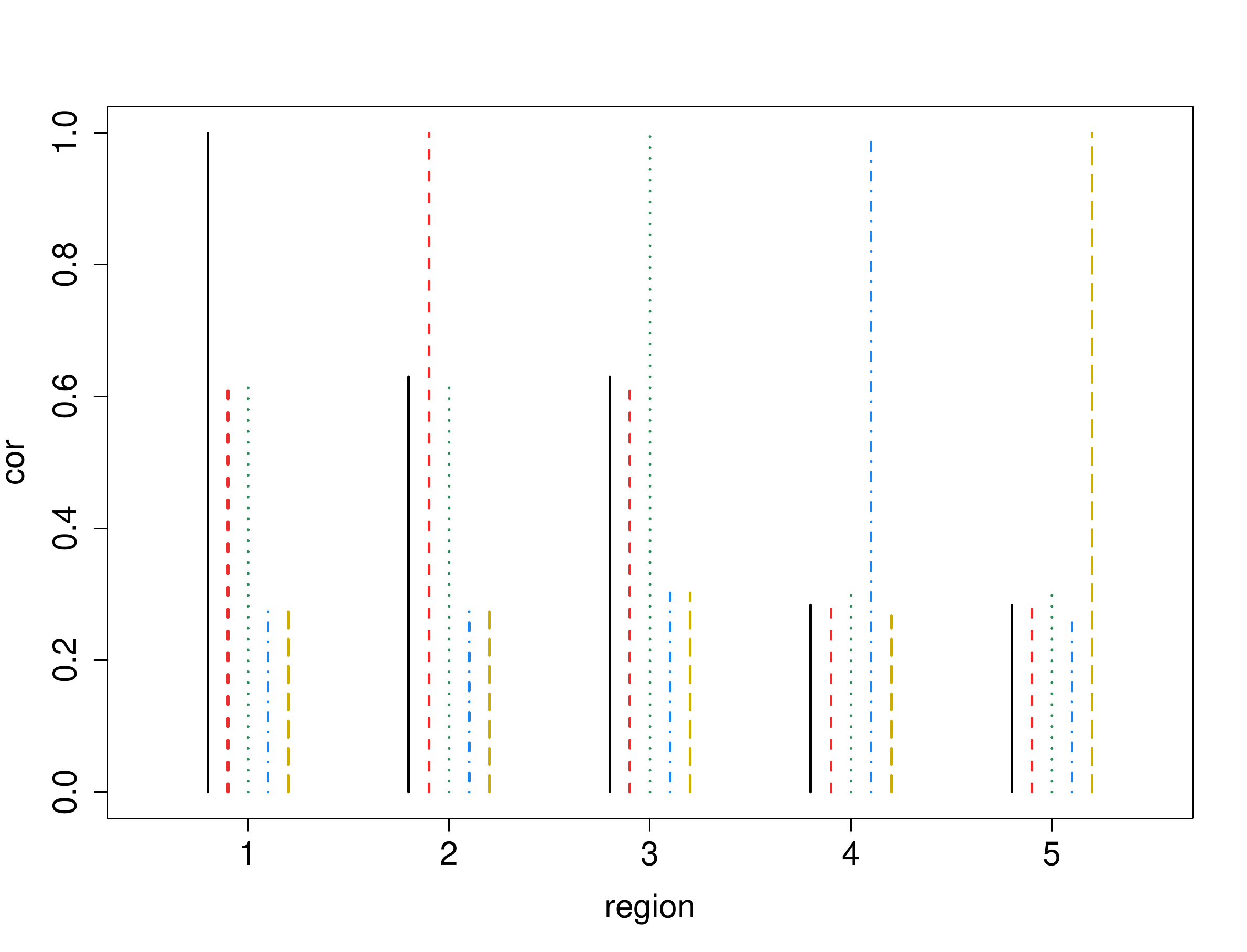}}
\centerline{\includegraphics[scale=0.34]{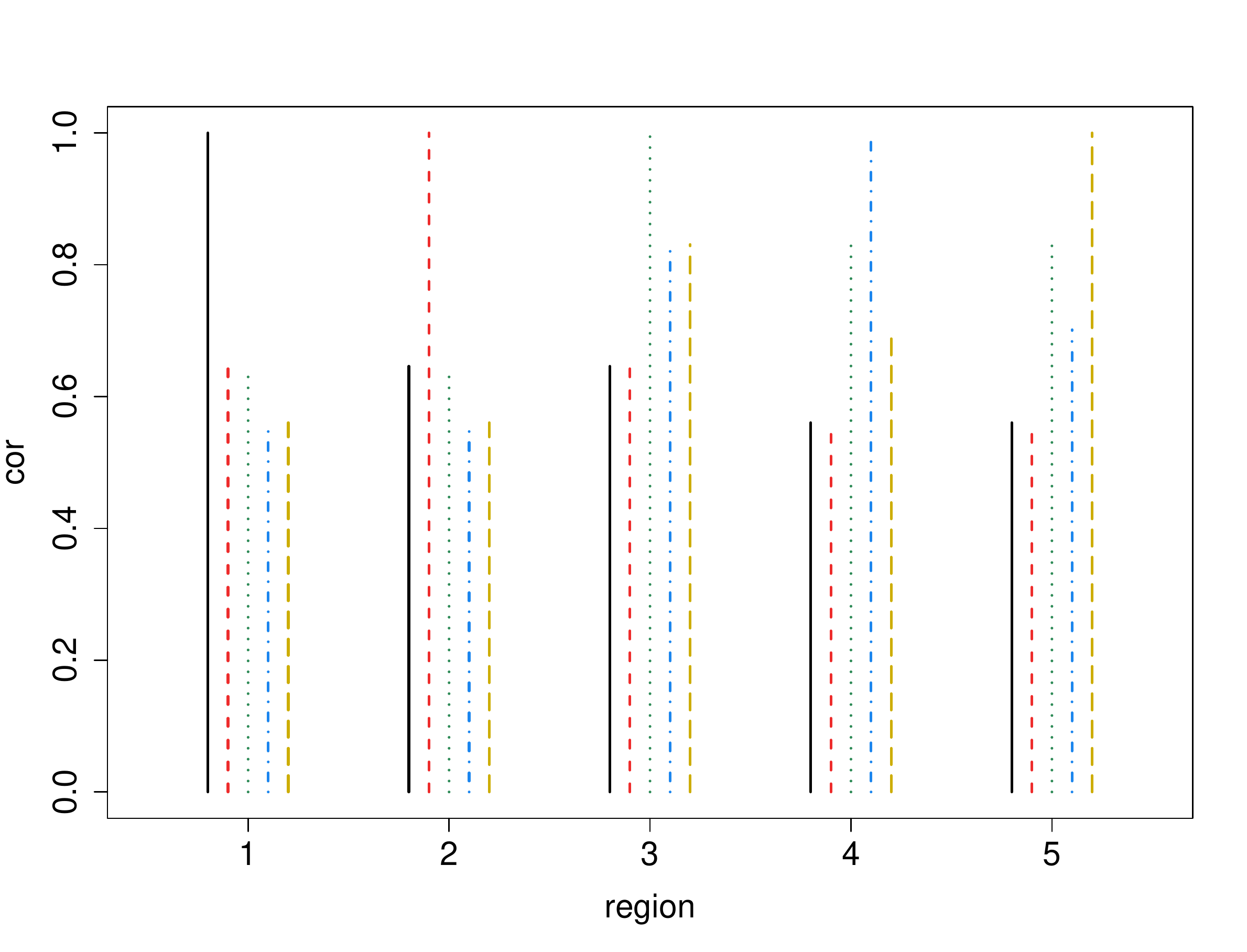}
\includegraphics[scale=0.34]{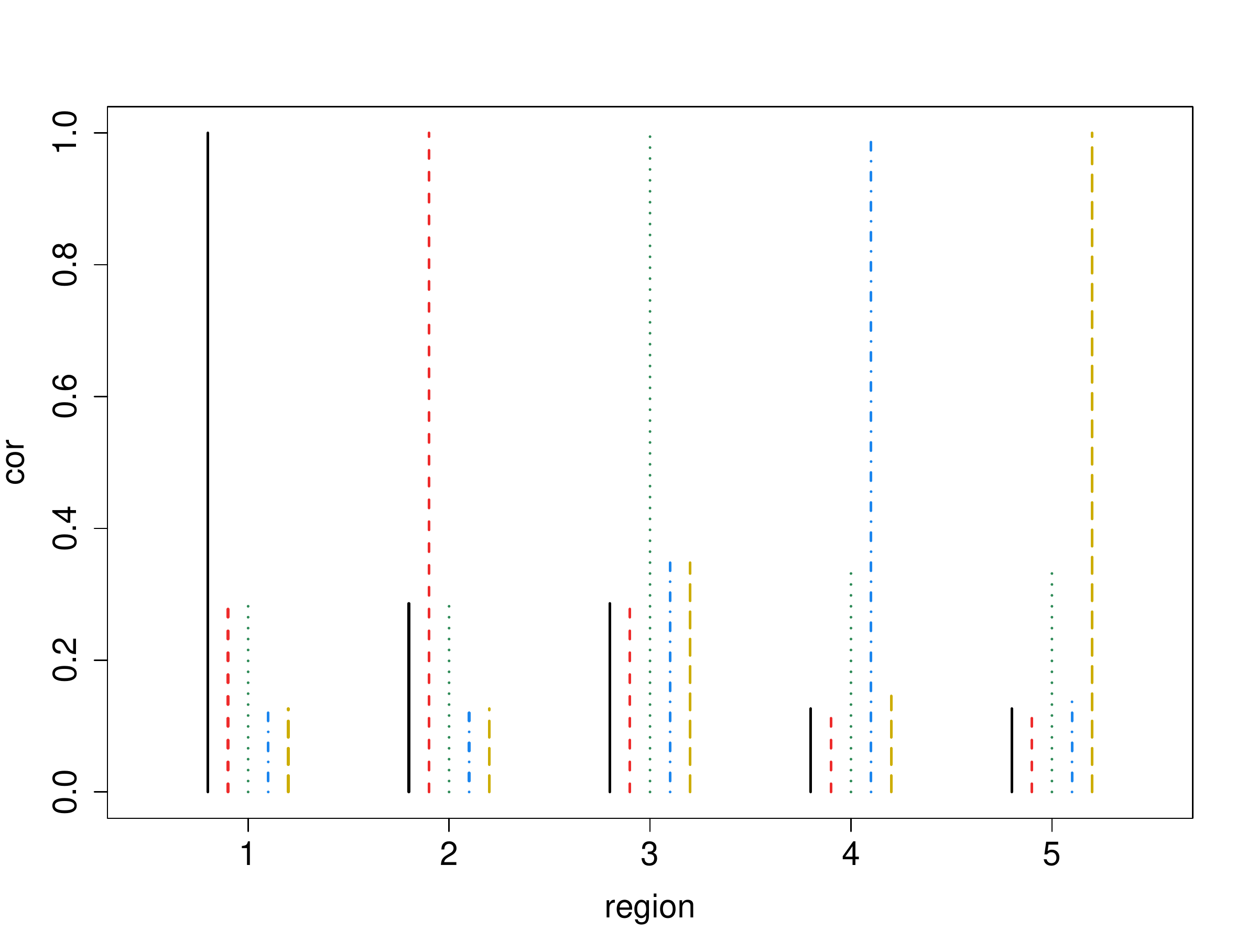}}
\vspace{-2mm}
\caption{{\small Correlation function $\Cor(Y_i,Y_k)$, $i,k=1,\ldots,5$ for the spatial dependence model of Figure~\ref{fig:spatial}. Four simulated scenarios from $n_0\sim\un(0,2)$, $n_j\mid a,b\sim\ga(a,b)$, $j=1,\ldots,5$ with $a \sim \ga(1,1)$ and $b \sim \ga(1,1)$.}}
\label{fig:scor2}
\end{figure}

\begin{figure}
\centerline{\includegraphics[scale=0.27]{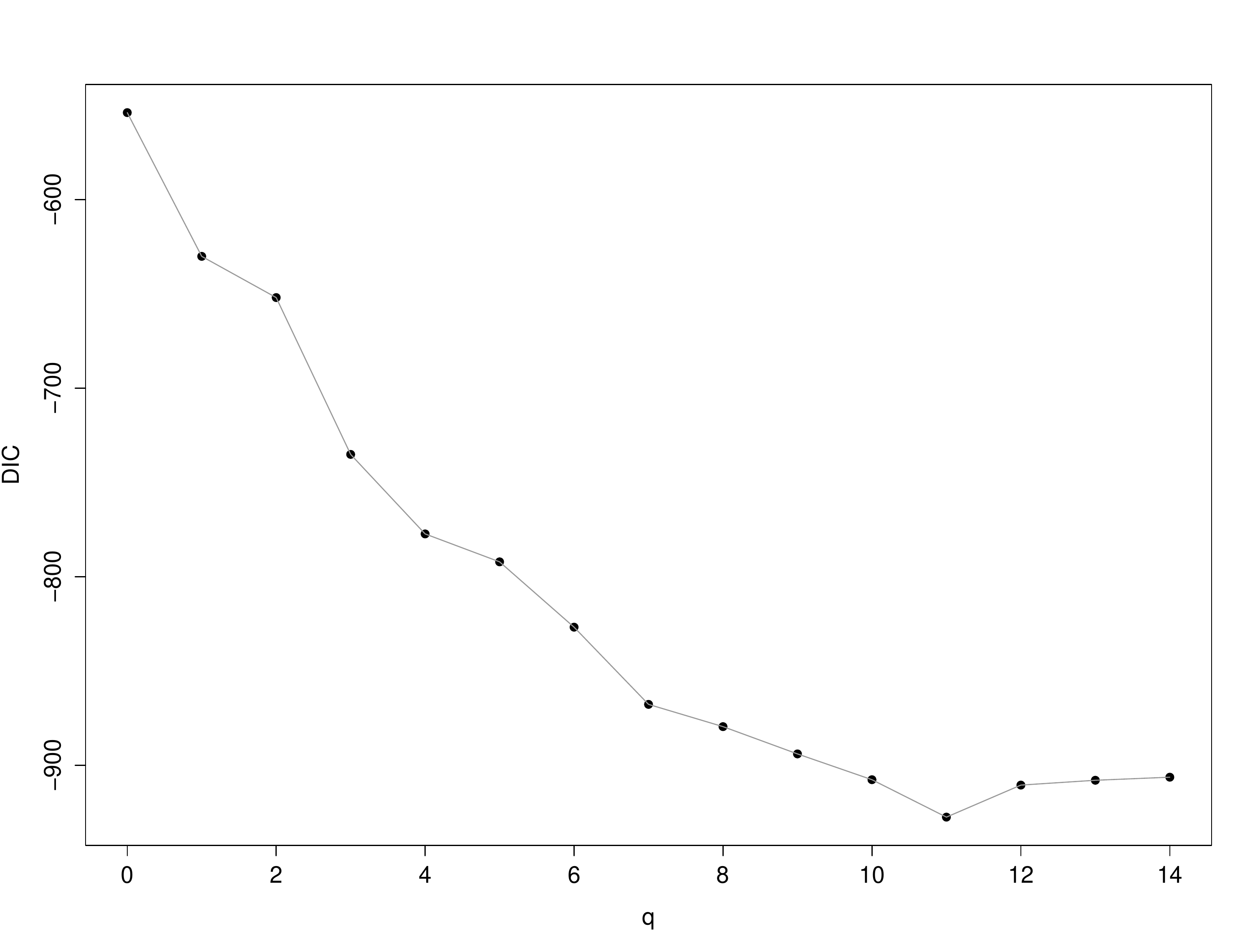}
\includegraphics[scale=0.27]{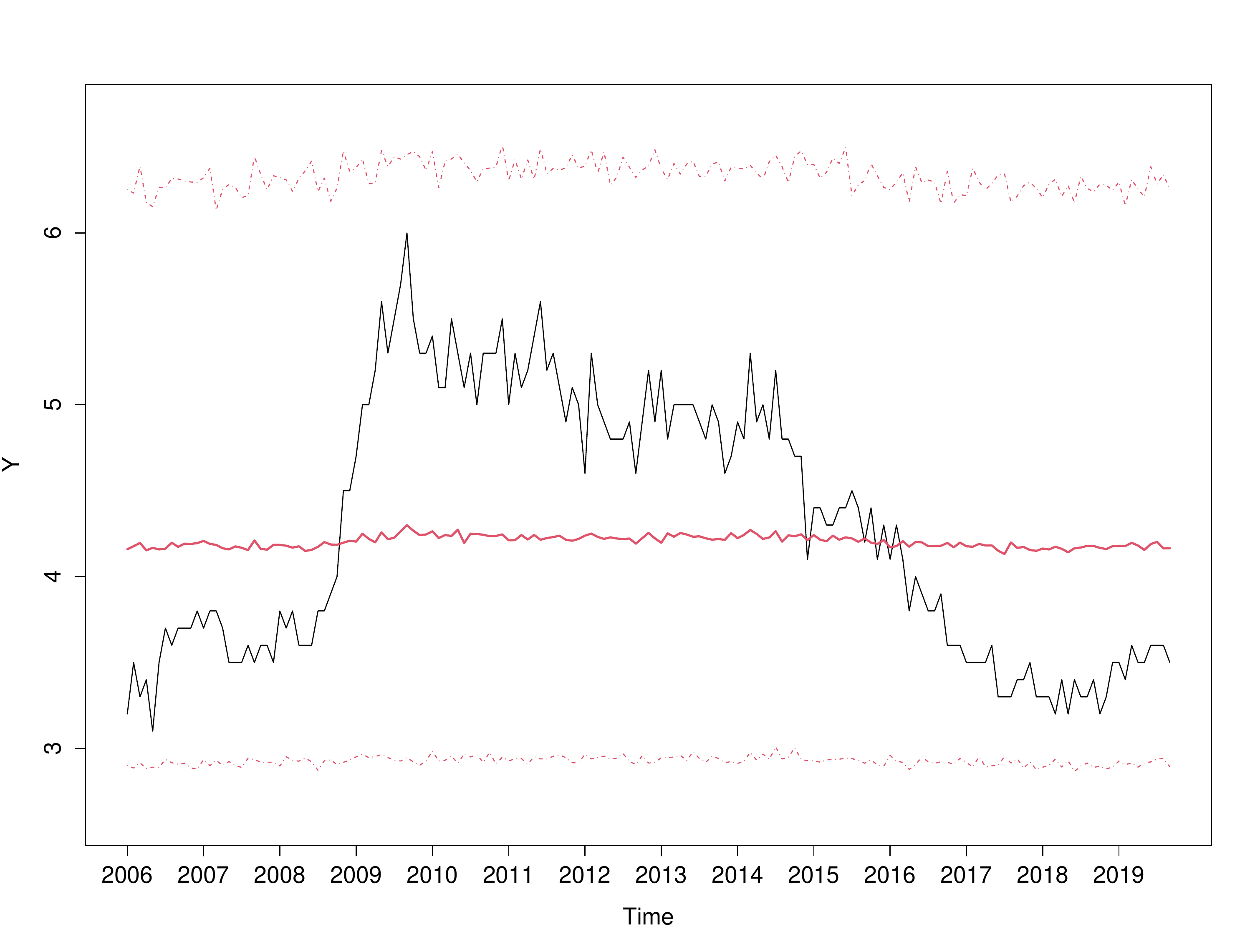}}
\centerline{\includegraphics[scale=0.27]{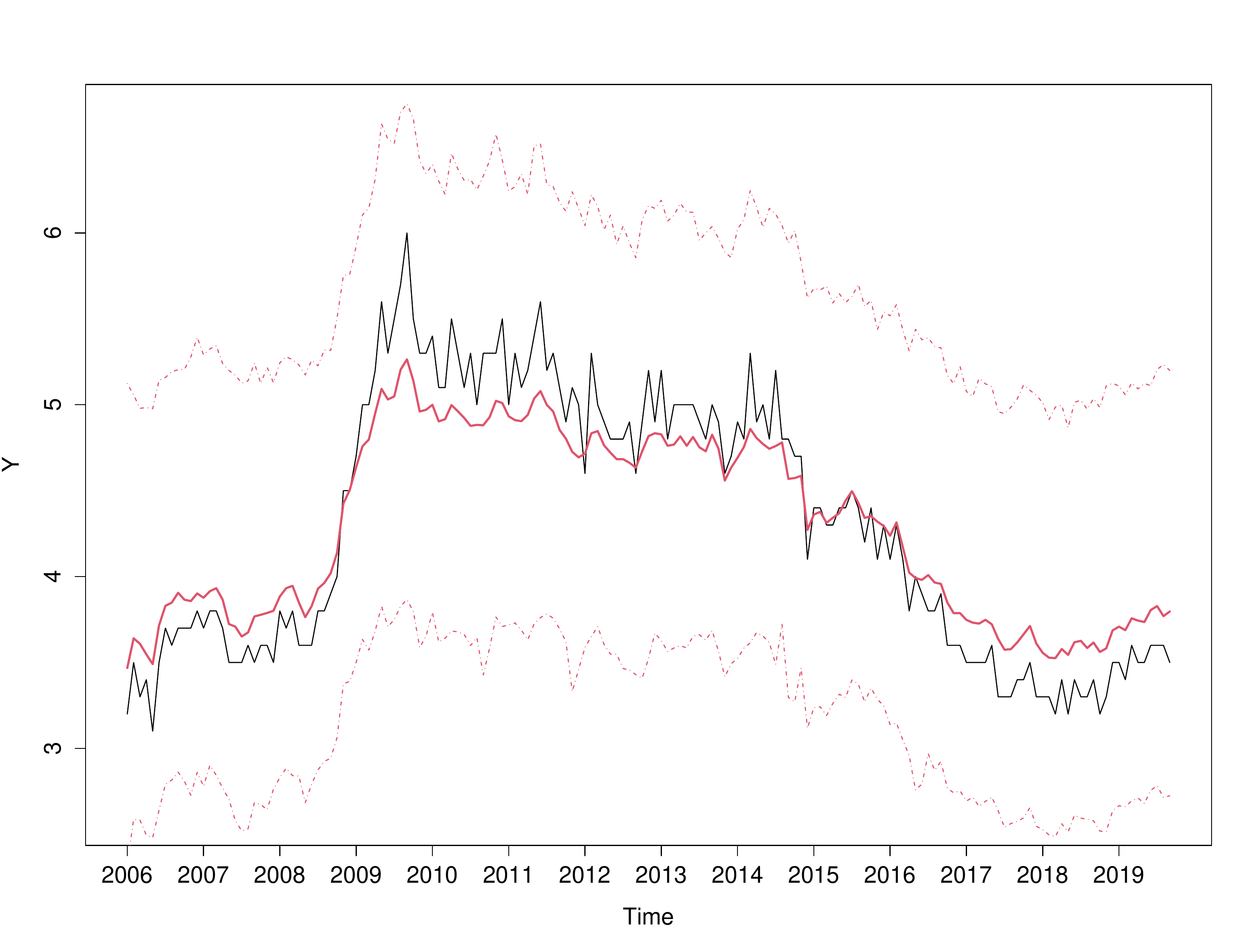}
\includegraphics[scale=0.27]{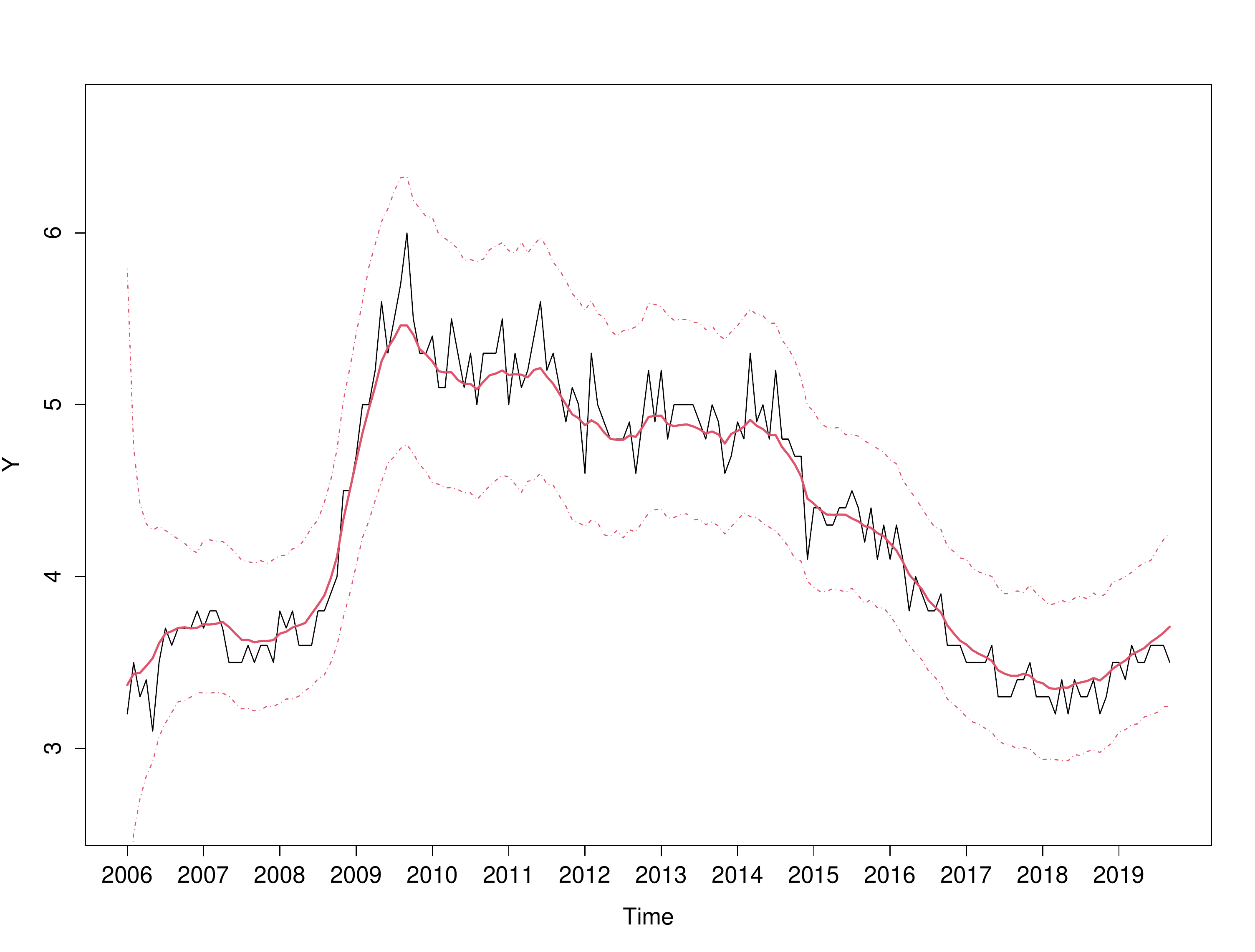}}
\vspace{-2mm}
\caption{{\small Monthly unemployment rates. DIC values (top-left panel). Model fit for $q=0$ (top-right), $q=1$ (bottom-left) and $q=13$ (bottom-right). Original data (solid line), point predictions (thick solid line) and 95\% credible intervals (dotted lines).}}
\label{fig:dataT}
\end{figure}

\begin{figure}
\vspace{-2cm}
\centerline{\includegraphics[scale=0.55]{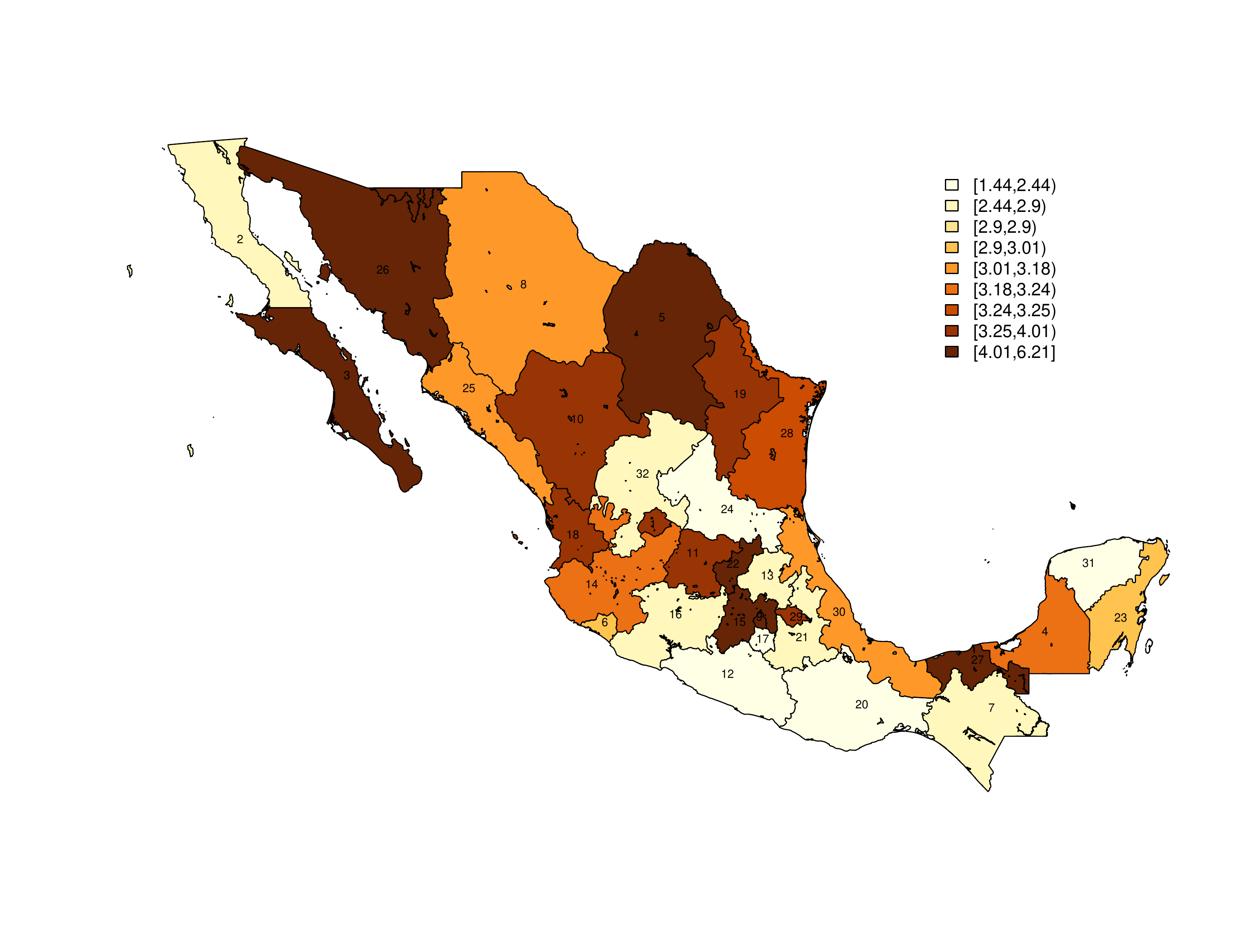}}
\vspace{-1.5cm}
\centerline{\includegraphics[scale=0.55]{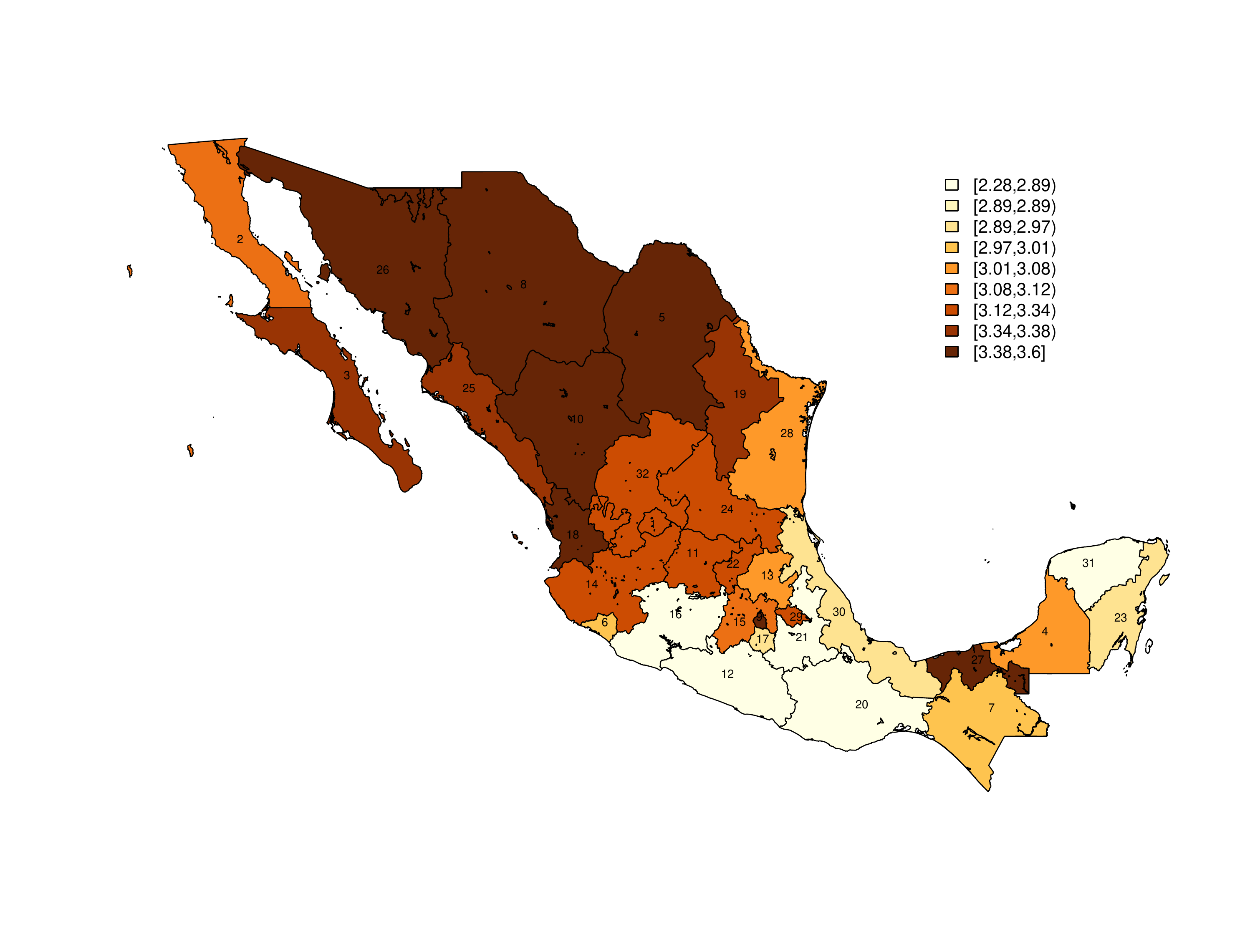}}
\vspace{-1.5cm}
\caption{{\small Unemployment rates for the 32 Mexican States. Observed data (top), model fit (bottom).}}
\label{fig:dataS}
\end{figure}

\end{document}